\newlist{steps}{enumerate}{1}
\setlist[steps, 1]{label = Step \arabic*:}
\newcommand{\gd}{\Delta}
\newcommand{\inpt}[1]{\langle #1 \rangle}
\newcommand{\gw}{\Omega}
\newcommand{\ga}{\gamma}
\newcommand{\G}{\Gamma}
\newcommand{\gl}{\lambda}
\newcommand{\gs}{\sigma}
\newcommand{\ms}{\mathscr}
\newcommand{\nb}{\nabla}
\newcommand{\vp}{\varphi}
\newcommand{\ve}{\varepsilon}
\newcommand{\pdr}{\partial}
\newcommand{\beq}{\begin{equation}}
\newcommand{\eeq}{\end{equation}}
\newcommand{\bea}{\begin{align}}
\newcommand{\eea}{\end{align}}
\newcommand{\bthm}{\begin{theorem}}
\newcommand{\ethm}{\end{theorem}}
\newcommand{\bpr}{\begin{proof}}
\newcommand{\epr}{\end{proof}}
\newcommand{\bcl}{\begin{corollary}}
\newcommand{\ecl}{\end{corollary}}
\newcommand{\bpn}{\begin{proposition}}
\newcommand{\epn}{\end{proposition}}
\newcommand{\bre}{\begin{remark}}
\newcommand{\ere}{\end{remark}}
\newcommand{\bdf}{\begin{definition}}
\newcommand{\edf}{\end{definition}}
\newcommand{\bss}{\begin{align*}}
\newcommand{\ess}{\end{align*}}
\newcommand{\bl}{\label}
\newcommand{\mR}{\mathbb{R}}
\newcommand{\mH}{\mathbb{H}}
\newcommand{\mE}{\mathbb{E}}
\newtheorem{theorem}{Theorem}[section]
\newtheorem{corollary}[theorem]{Corollary}
\newtheorem{proposition}[theorem]{Proposition}
\theoremstyle{definition}
\newtheorem{definition}[theorem]{Definition}
\theoremstyle{remark}
\newtheorem{remark}{Remark}
\numberwithin{equation}{section}
\begin{document}

\title[FitzHugh-Nagumo Neural Networks]{Dynamics and Synchronization of Boundary Coupled FitzHugh-Nagumo Neural Networks}

\author[L. Skrzypek]{Leslaw Skrzypek}
\address{Department of Mathematics and Statistics, University of South Florida, Tampa, FL 33620, USA}
\email{skrzypek@usf.edu}
\thanks{}

\author[Y. You]{Yuncheng You}
\address{Department of Mathematics and Statistics, University of South Florida, Tampa, FL 33620, USA}
\email{you@mail.usf.edu}
\thanks{}

\subjclass[2010]{35B40, 35B45, 35K57, 35M33, 35Q92, 92B25, 92C20}

\date{May 27, 2020}


\keywords{FitzHugh-Nagumo neural network, boundary coupling, synchronization, dissipative dynamics}

\begin{abstract} 
In this work a new mathematical model for complex neural networks is presented by the partly diffusive FitzHugh-Nagumo equations with ensemble boundary coupling. We analyze the dissipative dynamics and boundary coupling dynamics of the solution semiflow with sharp estimates. The exponential synchronization of this kind complex neural networks is proved under the condition that synaptic stimulation signal strength reaches a threshold quantitatively expressed. 
\end{abstract}

\maketitle

\section{\textbf{Introduction}}

Synchronization of biological neural networks plays a significant role in the activities of the brain and the central nervous system. Study of synchronization and desynchronization mechanisms with possible control and intervention by means of mathematical models and analysis is one of the central topics in neuroscience and medical science, also in the theory of artificial neural networks. Researches demonstrated that fast synchronization may lead to enhanced functionality and performance of central nervous system. Though it may possibly lead to functional disorders of neuron systems and to cause pathological patterns like epilepsy and Parkinson's disease \cite{HBB, JC, L, TBSS}.

In recent years, the dynamical behavior and problems of complex and large-scale networks including convolutional neural networks in machine learning and deep learning, Internet networks, epidemic spreading networks, collaborative or competitive networks, and social networks attract more and more research interests, cf. \cite{A, B, I, PS, W}. 

Approximately 86 billion neurons can be found in human nervous system and they are connected with approximately $10^{14}$ synapses. Neurons are the nerve cells which form the major pathways of brain wave transmission and communication. The complex neural networks are capable to process, coordinate, and integrate tremendous amount of synaptic signals.  

Neuron signals are short electrical pulses called spike or action potential. Synaptic pulse inputs received by neuron dendrites modify the intercellular membrane potentials and may cause bursting in alternating phases of rapid firing and then refractory quiescence. Neuronal signals triggered at the axon hillock can propagate along the axon and diffuse to neighbor neurons across synapses. In a complex neural network, stimulation signals through synaptic couplings in ensemble neuron activities have to reach certain threshold for achieving synchronization \cite{DJ, SK}. The current mathematical models for synchronization of biological neural networks are either ordinary or partial differential equations without exhibition of boundary coupling. 

The four-dimensional Hodgkin-Huxley model \cite{HH1, HH2} (1952) is highly nonlinear and elaborates the activity of the giant squid axon in terms of the cell membrane potential coupled with three equations of the voltage-gated K$^+$ current, transient Na$^+$ current, and the leak current of other ions. 

The simplified two-dimensional FitzHugh-Nagumo model \cite{FH, NM} (1961-1962) is reduced from the Hodgkin-Huxley equations and mathematically captures the essential neuron properties of excitation and bio-electric transmission with only two equations governing the membrane potential and the ionic current. This famous model yields exquisite and effective phase-plane analysis to demonstrate the refractorily periodic excitation of neurons in response to suprathreshold input pulse, similar to the oscillation-relaxation dynamics exhibited by the Van der Pol equation. However this 2D model cannot generate solutions showing self-sustained chaotic bursting. 

Another model is the three-dimensional Hindmarsh-Rose equations \cite{HR} (1984) and the diffusive or partly diffusive Hindmarsh-Rose equations recently proposed and studied \cite{DH, ET, Phan, PY} on topics of regular and chaotic bursting dynamics, global attractors, and random attractors.

Synchronization for biological neural networks has been studied by using several mathematical models and methods. Most published results are for the FitzHuigh-Nagumo networks of neurons coupled by gap junctions or called space-clamped coupling \cite{AC, IJ, WLZ, Yong} featuring the linear coupling $C \sum_{j (\neq i)} a_{ij} (x_j (t) - x_i (t))$ in the membrane potential equation for the $i$-th neuron. The mean field models of Hodgkin-Huxley and FitzHuigh-Nagumo neuron networks may or may not with noise were studied in \cite{BF, Dick, QT, Tr} replacing the above sum of couplings by its average. Synchronization and control of the diffusive FitzHugh-Nagumo type neural networks have been investigated in \cite{AA, YCY}, which consists of multiple chain-like neurons with the distributed coupling terms $\alpha_i (u_{i-1}(t,x) - u_i (t,x))$ and $\beta_i (v_{i-1}(t,x) - v_i (t,x))$ in the two reaction-diffusion equations for the $i$-th neuron, where $x$ is in the interior of a spatial domain, or by the pointwise pinning-impulse controllers also in the interior of a spatial domain. Synchronization of chaotic neural networks and stochastic neural networks has also been studied in \cite{Dick, SG, WZD},

Beside synchronization of two coupled Hindmarsh-Rose neurons has been studied in \cite{DH, CY}. Recently we have proved in \cite{PyY} the asymptotic synchronization of the star-like Hindmarsh-Rose neural networks.

In this paper,we shall present a mathematical model of FitzHugh-Nagumo neural networks featuring the partly diffusive FitzHugh-Nagumo equations with ensemble boundary coupling based on the Fick's law and the Kirchhoff's law of bio-electrical and biochemical synapses crossing the neuron boundaries. In the rest of the paper, we shall formulate the mathematical framework of this model in Section 2, then prove the dissipative dynamics in Section 3 and the coupling dynamics in Section 4 of the solution semiflows in the basic space $H = L^2 (\gw, \mathbb{R}^2)$ and the regular space $E = H^1(\gw) \times L^2(\gw)$ as well as the boundary trace space. Finally we reach the proof of the main result on asymptotic synchronization of this type complex neural networks in Section 5.

\section{\textbf{FitzHugh-Nagumo Neural Networks with Boundary Coupling}}

\vspace{3pt}
We shall consider a network of boundary coupled neurons denoted by $\mathcal{N}_i, 1 \leq i \leq m,$ modeled by the partly diffusive FitzHugh-Nagumo equations,
\beq \bl{cHR}
\begin{split}
	\frac{\pdr u_i}{\pdr t} & = d \gd u_i + f(u_i, x) - \gs w_i + J,  \quad 1 \leq i \leq m, \\
	\frac{\pdr w_i}{\pdr t} & = \ve (u_i + a - b w_i), \quad 1 \leq i \leq m,
\end{split}
\eeq
for $t > 0,\; x \in \gw \subset \mathbb{R}^{n}$ ($n \leq 3$), the network size $m \geq 2$ is an integer, the spatial domain $\gw$ is bounded and its boundary denoted by
$$
	\partial \gw = \G = \bigcup_{j = 1}^m \G_{ij}, \quad \text{for} \;\, i = 1, \cdots, m,
$$
is locally Lipschitz continuous, where $\G_{ij} = \G_{ji}$ and for each given $i \in \{1, \cdots, m\}$ the boundary pieces $\{\G_{ij}: j = 1, \cdots, m\}$  are measurable and mutually non-overlapping. 

Here $(u_i (t,x), w_i (t,x)), \,i = 1, \cdots, m,$ are the state variables for the $i$-th neuron $\mathcal{N}_i$ in this network, which is coupled with the other neurons $\{\mathcal{N}_j: j \neq i \}$ in the network through the boundary conditions 
\beq \label{nbc}
	\frac{\pdr u_i}{\pdr \nu} (t, x) + p u_i = pu_j, \quad \text{for}  \;\, x \in \G_{ij}, \quad j \in \{1, \cdots, m\},
\eeq
for each of the neurons indexed by $1 \leq i \leq m$ in the network, where $\pdr/\pdr \nu$ stands for the normal outward derivative, $p > 0$ is the coupling strength constant. By \eqref{nbc}, $\frac{\pdr u_i}{\pdr \nu} (t, x)  = 0$ for $x \in \G_{ii}, 1 \leq i \leq m$. 

The initial conditions of the equations \eqref{cHR} to be specified will be denoted by
\beq \bl{inc}
	u_i(0, x) = u_i^0 (x), \;\, w_i (0, x) = w_i^0 (x), \quad x \in \gw, \;\, 1 \leq i \leq m.
\eeq
All the parameters in this system \eqref{cHR} are positive constants except the external current $J \in \mR$. Here we assume that $J > 0$ for notational simplicity, since otherwise it can be replaced by $|J|$ in the sequel argument. 

We make the following Assumption: The scalar function $f \in C^1 (\mathbb{R} \times \gw)$ satisfies the properties:
\beq \bl{Asp}
	\begin{split}
	f(s, x) s &\leq  - \gl |s|^4 + \vp (x), \quad s \in \mathbb{R}, \; x \in \gw, \\[5pt]
	| f(s, x) | &\leq \alpha |s|^3 + \zeta (x), \quad s \in \mathbb{R}, \; x \in \gw, \\
	\left|\frac{\pdr f}{\pdr s} (s, x) \right| &\leq \beta |s|^2 + \xi (x) \quad \text{and} \quad \frac{\pdr f}{\pdr s} (s, x) \leq \ga,  \quad s \in \mathbb{R}, \; x \in \gw, 
	\end{split}
\eeq
where $\gl, \alpha, \beta$ and $\ga$ are positive constants, and $\vp, \zeta, \xi \in L^2 (\gw)$ are given functions. 

In this system \eqref{cHR} of the FitzHugh-Nagumo neural network, for the $i$-th neuron $\mathcal{N}_i$,  $u_i(t,x)$ is the fast \emph{excitatory} variable representing the transmembrane electrical voltage of the neuron cell  and $w_i(t, x)$ is the slow \emph{recovering} variable describing the ionic currents. This mathematical model of the partly diffusive FitzHugh-Nagumo neural networks is a system of partial-ordinary differential equations featuring the Robin-type boundary conditions, which characterizes the neuron dynamics on the interior domain as well as the combination of the Fick's law and the Kirchhoff's law crossing the coupled neuron boundaries. This model also reflects that biophysical signals through synapses are mainly taken place related to the $u_i$-equations for neuron membrane potentials. 

In this study of the initial-boundary value problem \eqref{cHR}-\eqref{inc} of the FitzHugh-Nagumo neural networks, we define the following Hilbert spaces for each of the subsystems representing the involved single neurons
$$
	H = L^2 (\gw, \mR^2), \quad  \text{and} \quad E = H^1 (\gw) \times L^2 (\gw ),
$$
and the corresponding product Hilbert spaces 
$$
	\mathbb{H} =  [L^2 (\gw, \mR^2)]^{m} \quad \text{and} \quad \mathbb{E} = [H^1 (\gw) \times L^2 (\gw)]^{m}
$$
for the entire network system \eqref{cHR}-\eqref{inc}. The norm and the inner-product of the Hilbert spaces $\mathbb{H}, \, H,$ or $L^2(\gw)$ will be denoted by $\| \, \cdot \, \|$ and $\inpt{\,\cdot , \cdot\,}$, respectively. The norm of $\mathbb{E}$ or $E$ will be denoted by $\| \, \cdot \, \|_\mathbb{E}$ or $\| \, \cdot \, \|_E$. We use $| \, \cdot \, |$ to denote the vector norm or the Lebesgue measure of a set in $\mR^n$.

Then the initial-boundary value problem \eqref{cHR}-\eqref{inc} is formulated as an initial value problem of the evolutionary equation:
\begin{equation} \label{pb}
	\begin{split}
	\frac{\partial g_i}{\partial t} = A_i g_i &\, + F(g_i), \quad t > 0, \;\, 1 \leq i \leq m, \\
	& g_i (0) = g_i^0 \in H.
	\end{split}
\end{equation}
Here $g_i(t) = \text{col}\, (u_i(t, \cdot), w_i(t, \cdot))$. The initial data (or called initial states) are functions of the spatial variable $x \in \gw$ and  denoted by $g_i^0 = \text{col}\, (u_i^0, w_i^0)$. The nonpositive, self-adjoint operator $A = diag \, (A_1, \cdots, A_m)$ is defined by the block operator
\begin{equation} \label{opA}
	A_i = 
\begin{bmatrix}
d \gd \quad & 0  \\[3pt]
0 \quad & - \ve b I 
\end{bmatrix}, \quad 1 \leq i \leq m,
\end{equation}
with its domain
$$
	D(A) = \{ \text{col}\, (h_1, \cdots, h_m) \in [H^2(\gw) \times L^2 (\gw)]^{m}: \text{\eqref{nbc} satisfied}\}.
$$ 
By the Assunption \eqref{Asp}, the continuous Sobolev imbedding $H^{1}(\gw) \hookrightarrow L^6(\gw)$ for space dimension $n \leq 3$ and  the H\"{o}lder inequality, it is seen that the nonlinear mapping 
\begin{equation} \label{opf}
	F(g_i) =
\begin{pmatrix}
f(u_i, x) - \gs w_i + J \\[3pt]
\ve (u_i + a) \\[3pt]
\end{pmatrix}
: E \longrightarrow H 
\end{equation}
is locally Lipschitz continuous for $1 \leq i \leq m$. 

Note that the original FitzHugh-Nagumo equations in \cite{FH, NM} were given by the ordinary differential equations
\begin{equation*}
	\begin{split}
	\frac{du}{dt} &= u - \frac{u^3}{3} - w + J, \quad \text{or} \quad \frac{du}{dt} = \kappa u(u- c)(1 - u) - \gs w + J \\
	\frac{dw}{dt} &= \frac{1}{\tau} (u + a - bw).
	\end{split}
\end{equation*}
It is easy to check that the nonlinearity in the above $u$-equation really satisfies the Assumption \eqref{Asp}. 

Here we consider the weak solutions of this initial value problem \eqref{pb}, cf. \cite[Section XV.3]{CV} and the corresponding definition we presented in \cite{CY, PyY}. The following proposition claiming the local existence and uniqueness of weak solutions in time can be proved by the Galerkin approximation method.

\begin{proposition} \label{pps}
	For any given initial state $(g_1^0, \cdots , g_m^0) \in \mathbb{H}$, there exists a unique weak solution $(g_1 (t, g_1^0), \cdots, g_m (t, g_m^0))$ local in time $t \in [0, \tau]$, for some $\tau > 0$, of the initial value problem \eqref{pb} formulated from the initial-boundary value problem \eqref{cHR}-\eqref{inc}. The weak solution continuously depends on the initial data and satisfies 
	\begin{equation} \label{soln}
	(g_1, \cdots, g_m)  \in C([0, \tau]; \,\mH) \cap C^1 ((0, \tau); \,\mH) \cap L^2 ([0, \tau]; \,\mE).
	\end{equation}
If the initial state is in $\mE$, then the solution is a strong solution with the regularity
	\begin{equation} \bl{ss}
	(g_1, \cdots, g_m) \in C([0, \tau]; \,\mE) \cap C^1 ((0, \tau); \,\mE) \cap L^2 ([0, \tau]; \,D(A)).
	\end{equation}
\end{proposition}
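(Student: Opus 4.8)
The plan is to carry out the Galerkin approximation scheme, the essential ingredients being a basis compatible with the coupled Robin boundary conditions \eqref{nbc}, uniform a priori estimates driven by the dissipativity in Assumption \eqref{Asp}, and a compactness argument to pass to the limit in the cubic nonlinearity. First I would select a Galerkin basis $\{e_k\}_{k \geq 1}$ consisting of the eigenfunctions of the self-adjoint elliptic operator on $[H^1(\gw)]^m$ generated by $-d\gd$ together with the coupled boundary conditions \eqref{nbc}; the associated bilinear form is bounded and coercive on $[H^1(\gw)]^m$, so this operator has compact resolvent and its eigenfunctions form a complete orthonormal system in $[L^2(\gw)]^m$ lying in the domain. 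For the recovering variables $w_i$, governed only by the bounded operator $-\ve b I$, the same $e_k$ serve as an $L^2$ basis. Projecting \eqref{pb} onto the span of the first $N$ modes yields a finite-dimensional ODE system whose right-hand side is locally Lipschitz by \eqref{opf}, so the Cauchy-Lipschitz theorem provides a unique approximate solution $g_i^{(N)}$ on a maximal interval.

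The core step is the a priori estimate. Testing the $u_i$-equation with $u_i$ and summing over $i$, the Laplacian contributes $-d\sum_i \|\nb u_i\|^2$ plus the boundary integral $dp\sum_{i,j}\int_{\G_{ij}}(u_iu_j - u_i^2)\,dS$; using the symmetry $\G_{ij}=\G_{ji}$ this rearranges to $-\tfrac{dp}{2}\sum_{i,j}\int_{\G_{ij}}(u_i-u_j)^2\,dS \leq 0$, so the boundary coupling is genuinely dissipative and needs no sign hypothesis. The nonlinear term is controlled by the first inequality in \eqref{Asp}, giving $\langle f(u_i,\cdot),u_i\rangle \leq -\gl\|u_i\|_{L^4}^4 + \|\vp\|_{L^1}$, while the cross terms $-\gs\langle w_i,u_i\rangle$ together with the $w_i$-equation tested with $w_i$ are absorbed by Young's inequality and the coercivity $-\ve b\|w_i\|^2$. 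Summing the two estimates and applying Gronwall's inequality yields a bound uniform in $N$ for $(g_1^{(N)},\dots,g_m^{(N)})$ in $L^\infty(0,\tau;\mH)\cap L^2(0,\tau;\mE)$, with an additional $L^4$ bound on each $u_i^{(N)}$; this simultaneously rules out finite-time blow-up of the ODE system, so the approximations exist on a common interval $[0,\tau]$.

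Finally I would pass to the limit. The uniform bounds give weak-$*$ convergence in $L^\infty(0,\tau;\mH)$ and weak convergence in $L^2(0,\tau;\mE)$; estimating $\pdr_t g_i^{(N)}$ in a negative-order space and invoking the Aubin--Lions lemma upgrades this to strong convergence in $L^2(0,\tau;\mH)$, hence a.e. convergence along a subsequence. The growth bound $|f(s,x)|\leq \ap|s|^3 + \zeta(x)$ together with the $L^4$ control then identifies the weak limit of $f(u_i^{(N)},\cdot)$ as $f(u_i,\cdot)$, so the limit is a weak solution with the regularity \eqref{soln}. Uniqueness and continuous dependence on the initial data follow by subtracting two solutions and applying the local Lipschitz property of $F$ from \eqref{opf} with Gronwall's inequality. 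When $g_i^0\in\mE$, higher-order estimates---testing the $u_i$-equation with $-\gd u_i$ and using the bound $\pdr f/\pdr s \leq \ga$---deliver the strong-solution regularity \eqref{ss}. The main obstacle is the nonlinear limit passage: since $f$ is only locally Lipschitz with cubic growth, the identification of its limit rests essentially on obtaining the strong $\mH$-convergence from Aubin--Lions, and on keeping the a priori estimates self-contained through the dissipativity in \eqref{Asp} and the non-positivity of the boundary coupling.
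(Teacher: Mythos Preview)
The paper does not actually supply a proof of this proposition: it simply remarks that the result ``can be proved by the Galerkin approximation method'' and refers to \cite[Section XV.3]{CV} and \cite{CY, PyY}. Your outline is precisely that Galerkin argument made explicit, and the ingredients you single out---the spectral basis adapted to the coupled Robin conditions \eqref{nbc}, the energy estimate exploiting the first line of \eqref{Asp} together with the sign computation showing the boundary coupling contributes $-\tfrac{dp}{2}\sum_{i,j}\int_{\G_{ij}}(u_i-u_j)^2\,dS\leq 0$, and the Aubin--Lions compactness plus the growth bound $|f(s,x)|\leq \alpha|s|^3+\zeta(x)$ to identify the nonlinear limit---are the correct ones. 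Your a priori estimate is in fact the same computation the paper later carries out in detail in the proof of Theorem \ref{Tm} (there with a weighting constant $C_1$), so your approach is fully consistent with the paper's; you have simply written out what the authors leave implicit.
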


The basics of autonomous dynamical systems or called semiflow generated by the evolutionary differential equations are referred to \cite{CV, SY, Tm}. Below is a key concept for dissipative dynamical systems we shall address in this work.

\begin{definition} \label{Dabsb}
	Let $\{S(t)\}_{t \geq 0}$ be a semiflow on a Banach space $\ms{X}$. A bounded set (usually a bounded ball) $B^*$ of $\ms{X}$ is called an absorbing set of this semiflow, if for any given bounded set $B \subset \ms{X}$ there exists a finite time $T_B \geq 0$ depending on $B$, such that $S(t)B \subset B^*$ permanently for all $t \geq T_B$. A semiflow is called dissipative if there exists an absorbing set.
\end{definition}

\section{\textbf{Dissipative Dynamics of the Solution Semiflow}}

In this section, first we prove the global existence in time of weak solutions for the formulated initial value problem \eqref{pb} of the boundary coupled FitzHugh-Nagumo neural networks \eqref{cHR}-\eqref{inc}. Then we show that there exists an absorbing set in the space $H$ for the solution semiflow.

\begin{theorem} \label{Tm}
	For any given initial state $(g_1^0, \cdots , g_m^0) \in \mathbb{H}$, there exists a unique global weak solution $(g_1 (t, g_1^0), \cdots, g_m (t, g_m^0))$ in the space $\mH, \, t \in [0, \infty)$, of the initial value problem \eqref{pb} for the boundary coupled FitzHugh-Nagumo neural network \eqref{cHR}-\eqref{inc}. 
\end{theorem}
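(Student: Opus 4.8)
The strategy is to upgrade the local weak solution furnished by Proposition~\ref{pps} to a global one by establishing an a~priori bound on the $\mH$-norm that is uniform on every finite time interval; the standard continuation principle then precludes finite-time blow-up. Since these energy estimates are only formal at the level of weak solutions, I would first perform them on the Galerkin approximants (equivalently on strong solutions issued from $\mE$-data), and then transfer the resulting bound to the weak solution by weak lower semicontinuity of the norm.

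First I would test the $u_i$-equation against $u_i$ and the $w_i$-equation against $w_i$ in $L^2(\gw)$, sum over $1\le i\le m$, and assemble the weighted energy
\bss
\Phi(t) = \sum_{i=1}^m \left( \|u_i\|^2 + \frac{\gs}{\ve}\,\|w_i\|^2 \right).
\ess
The weight $\gs/\ve$ is dictated by the cross terms: $-\gs\inpt{w_i,u_i}$ from the first equation and $\gs\inpt{u_i,w_i}$ from the $\gs/\ve$-weighted second equation cancel exactly, removing the only indefinite coupling between the fast and slow variables.

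The heart of the argument — and the step I expect to be the main obstacle — is the boundary coupling. Integrating the diffusion term by parts yields $d\inpt{\gd u_i,u_i} = -d\|\nb u_i\|^2 + d\int_{\G}\frac{\pdr u_i}{\pdr\nu}\,u_i\,dS$, and the Robin condition \eqref{nbc} rewrites the boundary integral as $p\sum_{j}\int_{\G_{ij}}(u_j u_i - u_i^2)\,dS$. Summing over $i$ and using the symmetry $\G_{ij}=\G_{ji}$ together with $u_iu_j\le\tfrac12(u_i^2+u_j^2)$, the entire boundary contribution is $\le 0$; hence it may be dropped in the upper estimate. Making this cancellation rigorous — this is precisely the dissipative nature of the ensemble boundary coupling — is the technical crux.

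The interior terms are then tamed by Assumption~\eqref{Asp}. The dissipativity bound $f(s,x)s\le -\gl|s|^4+\vp(x)$ gives $\inpt{f(u_i),u_i}\le -\gl\|u_i\|_{L^4(\gw)}^4 + \|\vp\|_{L^1}$, and the affine forcing terms $J\inpt{1,u_i}$ and $\gs a\inpt{1,w_i}$ are absorbed by Young's inequality into fractions of $-\gl\|u_i\|_{L^4(\gw)}^4$ and $-\gs b\|w_i\|^2$, respectively. Invoking $\|u_i\|_{L^4(\gw)}^4 \ge |\gw|^{-1}\|u_i\|^4 \ge c\,\|u_i\|^2 - C'$, I would obtain a differential inequality of the form $\frac{d\Phi}{dt}\le -\ggd\,\Phi + K$ with constants $\ggd,K>0$. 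Gronwall's lemma yields $\Phi(t)\le\Phi(0)+K/\ggd$ on the whole interval of existence, a bound independent of its right endpoint. This uniform control of the $\mH$-norm forbids blow-up, so the local solution continues to all $t\in[0,\infty)$; uniqueness and continuous dependence are inherited from Proposition~\ref{pps}.
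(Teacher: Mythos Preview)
Your proposal is correct and follows the same overall strategy as the paper: a weighted $L^2$ energy estimate, the boundary-coupling term shown to be nonpositive via the symmetry $\G_{ij}=\G_{ji}$, the Assumption~\eqref{Asp} handling the nonlinearity, and Gronwall to preclude blow-up.

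The one tactical difference worth noting is the choice of weight. You weight the $w_i$-equation by $\gs/\ve$, which makes the indefinite cross term $-\gs\inpt{w_i,u_i}+\gs\inpt{u_i,w_i}$ vanish identically by skew-symmetry; the paper instead weights the $u_i$-equation by a constant $C_1$, estimates the cross term $-C_1\gs u_i w_i$ by Young's inequality, and then chooses $C_1=\ve b\gl/(2\gs^2)$ so that the resulting $\|w_i\|^2$ contribution is absorbed by the damping $-\tfrac12\ve b\|w_i\|^2$. Your route is slightly cleaner and avoids this balancing act; the paper's route, on the other hand, produces along the way the explicit dissipative boundary term $-d\,C_1 p\sum_{i,j}\int_{\G_{ij}}(u_i-u_j)^2\,dx$ (rather than merely bounding the boundary contribution by zero), which it later reuses. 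Both lead to a differential inequality $\frac{d}{dt}\Phi+r\Phi\le K$ and hence to the same conclusion.
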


\begin{proof}
	Conduct the $L^2$ inner-products of the $u_i$-equation with $C_1 u_i(t)$ for $1 \leq i \leq m$, where the constant $C_1 > 0$ is to be chosen later, and then sum them up to get		
	\begin{equation*}
	\begin{split}
	&\frac{C_1}{2} \frac{d}{dt} \sum_{i = 1}^m \|u_i\|^2 + C_1 d\, \sum_{i=1}^m \|\nb u_i\|^2  = -  d\,C_1p \, \sum_{i=1}^m \sum_{j=1}^m \int_{\G_{ij}} ( u_i - u_j)^2\, dx \\
	&+ C_1 \sum_{i=1}^m \int_\gw (f(u_i, x) u_i - \gs u_i w_i +Ju_i) \, dx  \\
	\leq &\, C_1 \sum_{i=1}^m \int_\gw \left[ - \gl |u_i (t, x)|^4 + |\vp (x)| - \gs u_i (t, x) w_i (t, x) +Ju_i (t, x) \right] dx \\
	\leq &\, C_1 \sum_{i=1}^m \int_\gw \left[ - \gl u_i^4 + |\vp (x) | + \frac{1}{2}\left(\gl u_i^2 + \frac{\gs^2}{\gl} w_i ^2 \right) + \frac{1}{2} \left(\frac{J^2}{\gl} + \gl u_i^2\right) \right] dx \\[2pt]
	= &\, C_1 \sum_{i=1}^m \int_\gw \gl (u_i^2 - u_i^4)\, dx + \frac{C_1 \gs^2}{2\gl}\sum_{i=1}^m \|w_i \|^2  + C_1 m \left(\|\vp \|_{L^1} + \frac{J^2}{2\gl} |\gw| \right) \\[2pt]
	\leq &\, - \frac{1}{2} C_1 \gl \sum_{i=1}^m \int_\gw u_i^4 (t, x)\, dx + \frac{C_1 \gs^2}{2\gl}\sum_{i=1}^m \|w_i \|^2  + C_1 m \left(\|\vp \| |\gw|^{1/2} + \left(\frac{\gl}{2} + \frac{J^2}{2\gl}\right)|\gw| \right)
	\end{split}
	\end{equation*}
where we used the Gauss divergence theorem, the boundary coupling condition \eqref{nbc} and the Assumption \eqref{Asp}. In the last step, $u_i^2 \leq \frac{1}{2} (1 + u_i^4)$ is used.

Then sum up the $L^2$ inner-products of the $w_i$-equations with $w_i(t)$ for $1 \leq i \leq m$, by using Young's inequality, we obtain

	\begin{equation*} 
	\begin{split}
	&\frac{1}{2} \frac{d}{dt} \sum_{i=1}^m \| w_i \|^2 = \sum_{i=1}^m \int_\gw (\ve u_i w_i + \ve a w_i - \ve bw_i^2) \, dx  \\
	\leq &\, \sum_{i=1}^m \int_\gw  \left[\left(\frac{\ve}{b} u_i^2 + \frac{1}{4} \ve b \,w_i^2\right)  + \left(\frac{\ve a^2}{b} + \frac{1}{4} \ve b \,w_i^2\right) - \ve b \,w_i^2\right] dx \\
	= &\, \sum_{i=1}^m \frac{\ve}{b} \int_\gw u_i^2 (t, x)\, dx - \frac{1}{2} \ve b \, \sum_{i=1}^m \|w_i\|^2 + \frac{m \ve a^2}{b} |\gw |.
	\end{split}
	\end{equation*}
Now add the above two inequalities to obtain 
\beq \bl{uw}
	\begin{split}
	& \frac{1}{2}\frac{d}{dt} \sum_{i = 1}^m \left(C_1 \|u_i\|^2 +  \| w_i \|^2 \right) + C_1 d\, \sum_{i=1}^m \|\nb u_i\|^2 +  d\,C_1p \, \sum_{i=1}^m \sum_{j=1}^m \int_{\G_{ij}} ( u_i - u_j)^2\, dx \\
        \leq &\, - \sum_{i=1}^m \int_\gw \left(\frac{1}{2} C_1 \gl u_i^4 - \frac{\ve}{b} u_i^2 \right) dx + \sum_{i=1}^m \left( \frac{C_1 \gs^2}{2\gl} - \frac{1}{2} \ve b\right) \|w_i \|^2 \\
        & + C_1 m \left(\|\vp \| |\gw|^{1/2} + \left(\frac{\gl}{2} + \frac{J^2}{2\gl}\right)|\gw| \right) + \frac{m \ve a^2}{b} |\gw |,  \quad t \in I_{max} = [0, T_{max}),
	\end{split}
\eeq
where $I_{max}$ is the maximal existence interval of a weak solution. Choose constant 
$$
	C_1 = \frac{\ve b \gl}{2\gs^2} \quad \text{so that} \quad \frac{C_1 \gs^2}{2\gl} - \frac{\ve b}{2} = - \frac{\ve b}{4}.
$$
With this choice, from \eqref{uw} it follows that
\beq \bl{u1w}
	\begin{split}
	& \frac{1}{2}\frac{d}{dt} \sum_{i = 1}^m \left(C_1 \|u_i\|^2 +  \| w_i \|^2 \right) + C_1 d\, \sum_{i=1}^m \|\nb u_i\|^2 + \frac{1}{4} \sum_{i=1}^m \ve b \,\|w_i \|^2 \\
	& + \sum_{i=1}^m \int_\gw \left(\frac{1}{2} C_1 \gl u_i^4 - \frac{\ve}{b} u_i^2 \right) dx + d\,C_1p \, \sum_{i=1}^m \sum_{j=1}^m \int_{\G_{ij}} ( u_i - u_j)^2\, dx \\
	\leq &\, C_1 m \|\vp \|^2 + m \left(C_1 + \frac{C_1 \gl}{2} + \frac{C_1 J^2}{2\gl} + \frac{\ve a^2}{b}\right) |\gw |,  \quad  t \in I_{max} = [0, T_{max}).
	\end{split}
\eeq
By the choice of $C_1$ and completing square, we have

\begin{align*}
	\frac{1}{2} C_1 \gl u_i^4 - \frac{2\ve}{b} u_i^2 &= \frac{\ve b \gl^2}{4 \gs^2} u_i^4 - \frac{2\ve}{b} u_i^2 = \ve b \left(\frac{\gl^2}{4 \gs^2}u_i^4 - \frac{2}{b^2} u_i^2 \right) \\
	& = \ve b \left( \frac{\gl}{2\gs} u_i^2 - \frac{2\gs}{\gl b^2}\right)^2 - \frac{4 \ve \gs^2}{\gl^2 b^3},
\end{align*}
so that 
\beq \bl{kiq}
	\begin{split}
	\sum_{i=1}^m \int_\gw \left(\frac{1}{2} C_1 \gl u_i^4 - \frac{\ve}{b} u_i^2 \right) dx &=  \sum_{i=1}^m \frac{\ve}{b} \,\|u_i \|^2 + \sum_{i=1}^m \int_\gw \left(\frac{1}{2} C_1 \gl u_i^4 - \frac{2\ve}{b} u_i^2 \right) dx \\
	&\geq \sum_{i=1}^m \frac{\ve}{b} \,\|u_i \|^2 - \frac{4m \ve \gs^2}{\gl^2 b^3} |\gw |.
	\end{split}
\eeq
Substitute \eqref{kiq} into the integral term over $\gw$ in \eqref{u1w}. It yields the inequality 
\beq \bl{u2w}
	\begin{split}
	& \frac{d}{dt} \sum_{i = 1}^m \left(C_1 \|u_i\|^2 +  \| w_i \|^2 \right) + 2C_1 d\, \sum_{i=1}^m \|\nb u_i\|^2 + \frac{2\ve}{b} \sum_{i=1}^m  \|u_i \|^2 + \frac{\ve b}{2} \sum_{i=1}^m \|w_i \|^2 \\
	+ &\, 2d\,C_1p \,  \sum_{i=1}^m \sum_{j=1}^m \int_{\G_{ij}} ( u_i - u_j)^2\, dx \leq  2C_1 m \|\vp \|^2 + 2C_2 m |\gw |,  \quad  t \in I_{max},
	\end{split}
\eeq
where
$$
	C_2 = C_1 +  \frac{C_1 \gl}{2} + \frac{C_1 J^2}{2\gl} + \frac{\ve a^2}{b} + \frac{4\ve \gs^2}{\gl^2 b^3}.
$$
Consequently, \eqref{u2w} gives rise to the Gronwall-type differential inequality
	\beq \bl{GZ}
	\begin{split}
	 &\frac{d}{dt} \left[C_1 \sum_{i = 1}^m \|u_i\|^2 + \sum_{i=1}^m \| w_i \|^2 \right] + r \left[ C_1 \sum_{i = 1}^m \|u_i\|^2 + \sum_{i=1}^m \| w_i \|^2 \right]  \\
	\leq &\, \frac{d}{dt} \left[C_1 \sum_{i = 1}^m \|u_i\|^2  + \sum_{i=1}^m \| w_i \|^2 \right] + \frac{2\ve}{b} \sum_{i=1}^m \|u_i \|^2 + \frac{\ve b}{2} \sum_{i=1}^m \|w_i \|^2\\[3pt]
	\leq &\, 2C_1 m \|\vp \|^2 + 2C_2 m |\gw |,  \qquad  t \in I_{max} = [0, T_{max}),
	\end{split}
	\eeq
where 
$$
	r = \min \, \left\{ \frac{2\ve}{C_1 b}, \, \frac{\ve b}{2}\right\} = \min \, \left\{ \frac{4\gs^2}{\gl b^2}, \, \frac{\ve b}{2}\right\}.
$$
We can solve the differential inequality \eqref{GZ} to obtain the following bounding estimate of all the weak solutions on the maximal existence time interval $I_{max}$,
\beq \label{dse}
	\begin{split}
		&\sum_{i=1}^m \|g_i (t, g_i^0)\|^2 = \sum_{i=1}^m \left(\|u_i (t, u_i^0)\|^2 + \|w_i (t, w_i^0)\|^2 \right) \\
		\leq &\, \frac{1}{\min \{C_1, 1\}} e^{- r t} \sum_{i=1}^m \left(C_1\|u_i^0 \|^2 + \|w_i^0\|^2 \right) + \frac{2m}{r \min \{C_1, 1\}}\left( C_1 \|\vp\|^2 + C_2|\gw |\right)  \\
		\leq &\, \frac{\max \{C_1, 1\}}{\min \{C_1, 1\}}e^{- r t} \sum_{i=1}^m \|g_i^0\|^2 +  \frac{2m}{r \min \{C_1, 1\}} \left( C_1 \|\vp\|^2 + C_2|\gw |\right), \quad t \in [0, \infty).
	\end{split}
\eeq
Here it is shown that $I_{max} = [0, \infty)$ for every weak solution $g(t, g^0)$ because it will never blow up at any finite time. Therefore, for any initial data $g^0 = (g_1^0, \cdots, g_m^0) \in \mH$, the weak solution of the initial value problem \eqref{pb} of this neural network \eqref{cHR}-\eqref{inc} exists in the space $\mH$ for $t \in [0, \infty)$.
\end{proof}

The global existence and uniqueness of the weak solutions and their continuous dependence on the initial data enable us to define the solution semiflow $\{S(t): \mH \to \mH\}_{t \geq 0}$ of this boundary coupled FitzHugh-Nagumo neural network \eqref{cHR}-\eqref{inc} to be
\beq \bl{HRS}
	S(t): (g_1^0, \cdots, g_m^0) \longmapsto (g_1(t, g_1^0), \cdots, g_m (t, g_m^0)), \quad  t \geq 0.
\eeq
We shall call this semiflow $\{S(t)\}_{t \geq 0}$ on the space $\mH$ the \emph{boundary coupling FitzHugh-Nagumo semiflow}. 

The next result demonstrates the dissipative dynamics of this semiflow.
\begin{theorem} \label{Hab}
	There exists an absorbing set for the boundary coupling FitzHugh-Nagumo semiflow $\{S(t)\}_{t \geq 0}$ in the space $\mH$, which is the bounded ball 
\beq \label{abs}
	B^* = \{ h \in \mH: \| h \|^2 \leq Q\}
\eeq 
	where the constant 
$$
	Q = 1 +  \frac{2m}{r \min \{C_1, 1\}} \left( C_1 \|\vp\|^2 + C_2|\gw |\right).
$$ 
\end{theorem}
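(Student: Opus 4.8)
The plan is to read off the conclusion directly from the dissipativity estimate \eqref{dse} already derived in the proof of Theorem \ref{Tm}, since that estimate has exactly the exponential-plus-constant structure needed to produce an absorbing ball in the sense of Definition \ref{Dabsb}.

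First I would identify the left-hand side of \eqref{dse} as $\|S(t) g^0\|^2$, where $g^0 = (g_1^0, \dots, g_m^0)$ and $S(t)$ is the semiflow \eqref{HRS}, and note that the fixed (non-decaying) term on the right-hand side of \eqref{dse} equals precisely $Q - 1$ with $Q$ as in the statement. Thus \eqref{dse} can be rewritten as
\[
	\|S(t) g^0\|^2 \leq \frac{\max \{C_1, 1\}}{\min \{C_1, 1\}}\, e^{-rt} \|g^0\|^2 + (Q - 1), \qquad t \geq 0.
\]

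Next, given an arbitrary bounded set $B \subset \mH$, put $R = \sup_{g^0 \in B} \|g^0\|^2 < \infty$. Because $r > 0$, the decaying coefficient forces the first term below $1$ after a finite time, so I would take the entering time
\[
	T_B = \max \left\{ 0, \; \frac{1}{r} \ln \left( \frac{\max \{C_1, 1\}}{\min \{C_1, 1\}}\, R \right) \right\},
\]
which depends only on $B$ through $R$. For every $t \geq T_B$ and every $g^0 \in B$ the displayed inequality then yields $\|S(t) g^0\|^2 \leq 1 + (Q - 1) = Q$, that is $S(t) B \subset B^*$, so $B^*$ is absorbing by Definition \ref{Dabsb}.

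I do not expect a genuine analytic obstacle, because the essential uniform a priori bound was already the content of the Gronwall argument behind \eqref{dse} in Theorem \ref{Tm}. The only points requiring care are bookkeeping ones: matching the constant term of \eqref{dse} with the prescribed value of $Q$, and checking that $T_B$ is finite and uniform over $B$ (independent of the individual initial state $g^0$), so that the absorption holds simultaneously for the whole set $B$ as the definition demands.
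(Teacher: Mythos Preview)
Your proposal is correct and follows essentially the same approach as the paper's proof: both read off the absorbing ball directly from the exponential-plus-constant estimate \eqref{dse}, identify the constant term as $Q-1$, and compute the entering time $T_B$ via the same logarithmic formula (the paper writes it as $\frac{1}{r}\log^+(\rho\,\max\{C_1,1\}/\min\{C_1,1\})$, which is your $\max\{0,\cdot\}$ expression with $\rho$ in place of your $R$).
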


\begin{proof}
This is the consequence of the uniform estimate \eqref{dse}, which implies that
	\beq \label{lsp}
	\limsup_{t \to \infty} \, \sum_{i=1}^m \|g_i(t, g_i^0)\|^2 < Q 
	\eeq
for all weak solutions of \eqref{pb} with any initial data $(g_1^0, \cdots, g_N^0)$ in $\mH$. Moreover, for any given bounded set $B = \{h \in \mH: \|h \|^2 \leq \rho \}$ in $\mH$, there exists a finite time 

	\beq \label{T0B}
	T_B = \frac{1}{r} \log^+ \left(\rho \, \frac{\max \{C_1, 1\}}{\min \{C_1, 1\}}\right)
	\eeq
	such that all the solution trajectories started at the initial time $t = 0$ from the set $B$ will permanently enter the bounded ball $B^*$ shown in \eqref{abs} for $t \geq T_B$.  According to Definition \ref{Dabsb}, the bounded ball $B^*$ is an absorbing set in $\mH$ for the semiflow $\{S(t)\}_{t \geq 0}$ and this boundary coupling FitzHugh-Nagumo semiflow is dissipative.
\end{proof}

\section{\textbf{Coupling Dynamics of the Neural Network}}

In this section, we first prove a theorem on the ultimately uniform bound of all the weak solutions in the product space $L^4 (\gw) \times L^2(\gw)$. Through this bridge, we are able to show that all the weak solutions are ultimately uniform bounded in the regular space $E = H^1 (\gw) \times L^2 (\gw)$. Then by the trace theorem of Sobolev spaces, it results in an estimate of the ultimately uniform bound of the boundary coupling integrals of the $u_i$-components in the trace space $L^2 (\G)$ for this FitzHugh-Nagumo neural network.

\begin{theorem} \bl{Tm4}
	There exists a constant $L > 0$ shown in \eqref{L} such that for any initial data $g^0 = (g_1^0, \cdots , g_m^0) \in \mathbb{H}$,  the weak solution $g(t, g^0) = (g_1 (t, g_1^0), \cdots, g_m (t, g_m^0))$ of the initial value problem \eqref{pb} for the boundary coupled FitzHugh-Nagumo neural network \eqref{cHR}-\eqref{inc} satisfies the absorbing property in the space $L^4(\gw) \times L^2 (\gw)$,
\beq \bl{Lbd}
	\limsup_{t \to \infty}\, \sum_{i=1}^m \left(\|u_i(t)\|^4_{L^4} + \|w_i(t)\|^2 \right) \leq L. 
\eeq
\end{theorem}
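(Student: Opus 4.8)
The plan is to obtain an $L^4$-dissipation estimate for the membrane potentials $u_i$ by testing the $u_i$-equation against the cubic power $u_i^3$, and to combine it with the ultimate bound on $\sum_{i=1}^m\|w_i\|^2$ already furnished by Theorem~\ref{Hab}. First I would take the $L^2(\gw)$ inner product of the first equation in \eqref{cHR} with $u_i^3$ and sum over $1\le i\le m$, producing $\tfrac14\tfrac{d}{dt}\sum_i\|u_i\|_{L^4}^4$ on the left. This test function is admissible because $H^1(\gw)\hookrightarrow L^6(\gw)$ for $n\le 3$ gives $u_i^3\in L^2(\gw)$; rigorously the computation is carried out on the Galerkin approximations and then passed to the limit.

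The diffusion term, after Green's formula, contributes $-3d\int_\gw u_i^2|\nb u_i|^2\,dx\le 0$ together with a boundary integral. Inserting the coupling condition \eqref{nbc} and using the symmetry $\G_{ij}=\G_{ji}$, the boundary integrals combine pairwise into a nonpositive multiple of $\sum_{i,j}\int_{\G_{ij}}(u_i-u_j)(u_i^3-u_j^3)\,dx$, which is $\le 0$ because $(a-b)(a^3-b^3)=(a-b)^2(a^2+ab+b^2)\ge 0$. Both contributions are discarded. For the reaction term I multiply the first inequality of the Assumption \eqref{Asp} by $u_i^2\ge 0$ to get $\inpt{f(u_i,x),\,u_i^3}\le -\gl\int_\gw u_i^6\,dx+\int_\gw \vp\, u_i^2\,dx$, which supplies the decisive $-\gl\|u_i\|_{L^6}^6$ dissipation.

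It then remains to control the three lower-order quantities $\int_\gw\vp\,u_i^2\,dx$, $\gs\inpt{w_i,u_i^3}$ and $J\inpt{1,u_i^3}$. Each is absorbed by Young's inequality into a fraction $\tfrac{\gl}{4}\int_\gw u_i^6\,dx$ of the available dissipation, so that a net $-\tfrac{\gl}{4}\int_\gw u_i^6\,dx$ survives; the residuals are the term $\tfrac{\gs^2}{\gl}\|w_i\|^2$ coming from the $w_i$-coupling and constants controlled by $\|\vp\|^2$, $J^2$ and $|\gw|$, where I use $\vp\in L^2(\gw)\subset L^{3/2}(\gw)$ on the bounded domain. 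Summing over $i$ and invoking Theorem~\ref{Hab}, there is a time $T_B$ such that $\sum_i\|w_i(t)\|^2\le Q$ for $t\ge T_B$, turning the $w_i$-residual into a constant. Finally I convert the surviving dissipation into a coercive term by the elementary pointwise inequality $\gk s^4\le\gl s^6+C(\gk,\gl)$ integrated over $\gw$, arriving at $\frac{d}{dt}\Phi+\gk\Phi\le C_3$ for $\Phi(t)=\sum_i\|u_i(t)\|_{L^4}^4$ with $C_3$ independent of the initial data. Gronwall's lemma on $[T_B,\infty)$ gives $\limsup_{t\to\infty}\Phi(t)\le C_3/\gk$, and adding the bound $Q$ for $\sum_i\|w_i\|^2$ yields \eqref{Lbd} with $L=C_3/\gk+Q$.

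The main obstacle is the boundary-coupling term: unlike in Theorem~\ref{Tm}, where pairing with the linear test function $u_i$ produced the clean nonnegative quantity $(u_i-u_j)^2$, here the cubic test function forces one to exploit the monotonicity of $s\mapsto s^3$ via $(a-b)(a^3-b^3)\ge 0$ in tandem with the edge symmetry $\G_{ij}=\G_{ji}$ so that the coupling integral has a favorable sign and can be dropped. A secondary technical point is the passage from the higher dissipation $\|u_i\|_{L^6}^6$ back to the $L^4$ quantity, which I handle by the pointwise comparison above rather than by interpolation.
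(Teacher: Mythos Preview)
Your argument is correct in outline and takes a somewhat different route from the paper's. The paper couples the two equations: after the $u_i^3$-test it also tests the $w_i$-equation with $C_3 w_i$, adds the two inequalities, and chooses $C_3 = 14\gs^2/(\ve b\gl)$ so that the combined $\|w_i\|^2$ coefficient becomes $-\gs^2/\gl$; this yields a single Gronwall inequality for the joint quantity $\sum_i(\|u_i\|_{L^4}^4 + 2C_3\|w_i\|^2)$ and produces the explicit constant \eqref{L} directly. You instead decouple: you quote the absorbing set of Theorem~\ref{Hab} to freeze $\sum_i\|w_i\|^2\le Q$ for $t\ge T_B$, turning the residual $\gs^2\gl^{-1}\sum_i\|w_i\|^2$ into a constant, and then run Gronwall only on $\Phi=\sum_i\|u_i\|_{L^4}^4$. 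Your route is more modular and a bit shorter; the paper's coupled estimate is self-contained and delivers the specific constant $L$ the statement refers to. The boundary-term treatment and the $L^6\to L^4$ conversion are handled essentially the same way in both.

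One technical point you pass over too quickly: to start Gronwall at $T_B$ you need $\Phi(T_B)<\infty$, i.e.\ $u_i(T_B)\in L^4(\gw)$, which is not immediate for initial data merely in $\mathbb{H}$. The paper closes this by invoking the parabolic smoothing from Proposition~\ref{pps}: since $g\in L^2([0,1];\mathbb{E})$, there is $\tau_i\in(0,1)$ with $u_i(\tau_i)\in H^1(\gw)\subset L^4(\gw)$, after which the solution is strong and $\|u_i(t)\|_{L^4}$ stays finite. Your appeal to Galerkin approximations does not quite substitute for this, because the quantities $\Phi_N(T_B)$ need not be bounded uniformly in $N$, and without that you cannot pass the $\limsup_{t\to\infty}$ through the limit $N\to\infty$. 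Inserting the same regularity step fixes the issue.
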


\begin{proof}
Take the $L^2$ inner-product of the $u_i$-equation in \eqref{cHR} with $u_i^3, 1 \leq i \leq m$, and sum them up. By the boundary condition \eqref{nbc} and Assumption \eqref{Asp}, we get
\beq \label{u2}
	\begin{split}
	&\frac{1}{4}\, \frac{d}{dt} \sum_{i=1}^m \|u_i(t)\|^{4}_{L^{4}} + 3d\, \sum_{i=1}^m \|u_i \nb u_i \|^2_{L^2} \\
	+ &\, dp \, \sum_{i=1}^m \sum_{j=1}^m \int_{\G_{ij}} (u_i - u_j)^2 (u_i^2 + u_i u_j + u_j^2)\, dx \\
	= &\, \sum_{i=1}^m \int_\gw (f(u_i, x) u_i^3 - \gs u_i^3 w_i + J u_i^3)\, dx \\
	\leq &\, \sum_{i=1}^m \int_\gw (- \gl u_i^6 + u_i^2 \vp (x) - \gs u_i^3 w_i + Ju_i^3)\, dx, \quad t > 0.
	\end{split}
\eeq
By Cauchy inequality, it is seen that
\begin{equation*}  
	u_i^2 \vp (x) - \gs u_i^3 w_i + Ju_i^3 \leq \frac{1}{6} \gl u_i^4 + \frac{1}{3} \gl u_i^6 + \frac{6}{\gl} \left(\vp^2 (x) + \gs^2 |w_i(t,x)|^2 + J^2 \right).
\end{equation*}
Since $u_i^2 + u_i u_j + u_j^2 \geq 0$ always holds for all $1 \leq i, j \leq m$, from \eqref{u2} and the above inequality it follows that 
\beq \label{u3}
	\begin{split}
	&\frac{1}{4} \, \frac{d}{dt} \sum_{i=1}^m \|u_i(t)\|^{4}_{L^{4}} + 3d\, \sum_{i=1}^m \|u_i \nb u_i \|^2_{L^2} \\
	\leq &\, \sum_{i=1}^m  \left(\gl \int_\gw \left(\frac{1}{6} u_i^4 - \frac{2}{3} u_i^6\right) dx + \frac{6 \gs^2}{\gl} \|w_i (t)\|^2 \right)+ \frac{6 m}{\gl} \left(\|\vp \|^2 + J^2 |\gw |\right) \\
	\leq &\, \sum_{i=1}^m  \left( - \frac{\gl}{2} \int_\gw u_i^6 \, dx + \frac{6 \gs^2}{\gl} \|w_i (t)\|^2 \right)+ \frac{6 m}{\gl} \left(1 + \|\vp \|^2 + J^2 |\gw |\right), \quad t > 0, 
	\end{split}
\eeq
where $u_i^4 \leq \frac{1}{3} + \frac{2}{3} u_i^6 \leq 1 + u_i^6$ is used in the last step. 

Then take the $L^2$ inner-product of the $w_i$-equation in \eqref{cHR} with $C_3 w_i, 1 \leq i \leq m$, and sum them up. The positive constant $C_3$ is to be chosen. Then we have
\beq \bl{w2iq}
	\begin{split}
	&\frac{1}{2} \, \frac{d}{dt} \sum_{i=1}^m C_3 \| w_i \|^2 = \sum_{i=1}^m \int_\gw C_3 (\ve u_i w_i + \ve a w_i - \ve bw_i^2) \, dx  \\[2pt]
	\leq &\, \sum_{i=1}^m \int_\gw C_3 \left[\left(\frac{\ve}{b}\, u_i^2 + \frac{1}{4} \ve b \,w_i^2\right)  + \left(\frac{\ve a^2}{b} + \frac{1}{4} \ve b \,w_i^2\right) - \ve b \,w_i^2\right] dx \\[3pt]
	= &\, \sum_{i=1}^m \int_\gw \frac{C_3 \ve}{b}\, u_i^2 (t, x)\, dx - \frac{C_3}{2} \ve b \, \sum_{i=1}^m \|w_i\|^2 + \frac{C_3 m\, \ve a^2}{b}\, |\gw |, \quad t > 0.
	\end{split}
\eeq
Now add up \eqref{u3} and \eqref{w2iq} to obtain
\beq \bl{uwe}
	\begin{split}
	&\frac{1}{4}\, \frac{d}{dt} \sum_{i=1}^m \left(\|u_i(t)\|^{4}_{L^{4}} + 2C_3 \|w_i(t)\|^2 \right)+ 3d\, \sum_{i=1}^m \|u_i \nb u_i \|^2_{L^2} \\[3pt]
	\leq &\, \sum_{i=1}^m \left[\int_\gw \left(\frac{C_3 \ve}{b}\, u_i^2 (t, x) - \frac{\gl}{2}\, u_i^6 (t,x) \right) dx + \left(\frac{6 \gs^2}{\gl} - \frac{C_3 \ve b}{2}\right)\|w_i (t)\|^2 \right] \\[3pt]
	&\, + m \left(\frac{6}{\gl} (1 + \|\vp \|^2 + J^2 |\gw |) + \frac{C_3 \ve a^2}{b} |\gw |\right), \quad \text{for} \;\,  t > 0. \\
	\end{split}
\eeq	
Due to $u_i^6 - 2u_i^4 + u_i^2 = u_i^2 (u_i^2 - 1)^2 \geq 0$ so that $u_i^6 \geq 2u_i^4 - u_i^2$, we see that
\beq \bl{u64}
	\begin{split}
	&\frac{C_3 \ve}{b} u_i^2 (t, x) - \frac{\gl}{2} u_i^6 (t,x) = - \frac{\gl}{2} \left(u_i^6 - \frac{2C_3 \ve}{b \gl} u_i^2 \right) \leq - \frac{\gl}{2} \left(2u_i^4 - u_i^2 - \frac{2C_3 \ve}{b \gl} u_i^2 \right) \\
	= &\, - \frac{\gl}{2} u_i^4 - \frac{\gl}{2} \left[u_i^4 - \left(1 + \frac{2C_3 \ve}{b\gl} \right)u_i^2\right] \\
	= &\, - \frac{\gl}{2} u_i^4 - \frac{\gl}{2} \left[u_i^2 - \frac{1}{2}\left(1 + \frac{2C_3 \ve}{b\gl} \right)\right]^2 + \frac{\gl}{8}\left(1 + \frac{2C_3 \ve}{b\gl} \right)^2 \leq - \frac{\gl}{2} u_i^4 + \frac{\gl}{8}\left(1 + \frac{2C_3 \ve}{b\gl} \right)^2
	\end{split}
\eeq
Now we choose 
\beq \bl{C3w}
	C_3 = \frac{14 \gs^2}{\ve b\gl} \quad \text{so that} \quad \frac{6 \gs^2}{\gl} - \frac{C_3 \ve b}{2} = - \frac{\gs^2}{\gl}. 
\eeq
Substitute \eqref{u64} and \eqref{C3w} into \eqref{uwe} and delete the nonnegative term $3d \sum_i \|u_i \nb u_i\|^2$ from \eqref{uwe}. We end up with the differential inequality

\beq \bl{L4q}
	\begin{split}
	&\frac{1}{4}\,\frac{d}{dt} \sum_{i=1}^m \left(\|u_i(t)\|^{4}_{L^{4}} + 2C_3 \|w_i(t)\|^2 \right) + \sum_{i=1}^m \left(\frac{\gl}{2} \,\|u_i (t)\|^4_{L^4} + \frac{\gs^2}{\gl} \|w_i (t)\|^2\right) \\
	\leq &\, m \left[\frac{6}{\gl} (1 + \|\vp \|^2) + \left(\frac{6J^2}{\gl} + \frac{C_3 \ve a^2}{b} +  \frac{\gl}{8}\left(1 + \frac{2C_3 \ve}{b\gl} \right)^2 \right) |\gw| \right], \quad  \text{for} \; \; t > 0. 
	\end{split}
\eeq	
Set the constant
$$
	\delta = 4 \min \, \left\{\frac{\gl}{2}, \, \frac{\gs^2}{2C_3 \gl} \right\} = 2 \min \, \left\{\gl, \, \frac{\gs^2}{C_3 \gl} \right\}.
$$
Then \eqref{L4q} yields 
\beq \bl{Guw}
	\begin{split}
	&\frac{d}{dt} \sum_{i=1}^m \left(\|u_i(t)\|^{4}_{L^{4}} + 2C_3 \|w_i(t)\|^2 \right) + \delta \sum_{i=1}^m \left(\|u_i (t)\|^4_{L^4} + 2C_3 \|w_i (t)\|^2\right) \\
	\leq &\, m \left[\frac{24}{\gl} (1 + \|\vp \|^2) + \left[\frac{24 J^2}{\gl} + \frac{4C_3 \ve a^2}{b} +  \frac{\gl}{2} \left(1 + \frac{2C_3 \ve}{b\gl} \right)^2 \right] |\gw| \right], \;\,  \text{for} \; \; t > 0. 
	\end{split}
\eeq

By the parabolic regularity stated in Proposition \ref{pps} that any weak solution satisfies $g(\cdot ,  g^0) \in L^2([0, 1], \mathbb{E})$, there must be a time point $\tau_i \in (0, 1)$ such that $g_i(\tau_i, g_i^0) \in \mathbb{E}$ so that $u_i (\tau_i) \in H^1 (\gw) \subset L^4 (\gw)$. Then the second statement in Proposition \ref{pps} shows that any weak solution becomes a strong solution on the time interval $[1, \infty)$ so that 
$$
	(u_i , w_i) \in C([1, \infty); H^1 (\gw) \times L^2 (\gw)) \subset C([1, \infty); L^4 (\gw) \times L^2 (\gw)). 
$$

Finally apply the Gronwall inequality to \eqref{Guw}. Then we achieve the bounding estimate of all the solutions in the space $L^4 (\gw) \times L^2 (\gw)$:
\beq \bl{L4B}
	\begin{split}
	& \sum_{i=1}^m \left(\|u_i(t)\|^{4}_{L^{4}} + \|w_i(t)\|^2 \right) \leq \frac{1}{\min \{1, 2C_3\}} \sum_{i=1}^m \left(\|u_i(t)\|^{4}_{L^{4}} + 2C_3 \|w_i(t)\|^2 \right) \\
        \leq &\, \frac{e^{- \delta (t - \tau)}}{\min \{1, 2C_3\}} \sum_{i=1}^m \left(\|u_i(\tau_i)\|^{4}_{L^{4}} + 2C_3 \|w_i(\tau_i)\|^2 \right) + L, \quad \text{for} \;\, t \geq 1 > \tau_i,
	\end{split}
\eeq
where the positive constant $L$ is independent of any initial data,
\beq \bl{L}
	L = \frac{m}{\delta \min \{1, 2C_3\}} \left[\frac{24}{\gl} (1 + \|\vp \|^2) + \left[\frac{24J^2}{\gl} + \frac{4C_3 \ve a^2}{b} +  \frac{\gl}{2}\left(1 + \frac{2C_3 \ve}{b\gl} \right)^2 \right] |\gw| \right]
\eeq 
for any initial data $g^0 = (g_1^0, \cdots , g_m^0)$ in $\mathbb{H}$. This completes the proof of \eqref{Lbd} by taking the limit as $t \to \infty$ in \eqref{L4B}.
\end{proof}

The next result is to prove that the $u_i$-components of the weak solutions in the boundary coupling are ultimately and uniformly bounded in the regular interior space $H^1 (\gw)$ and to provide an estimate of the uniform bound. 

\begin{theorem} \bl{TmB}
	There exists a a constant $K > 0$ shown in \eqref{ET} such that for any initial data $g^0 = (g_1^0, \cdots , g_m^0) \in \mathbb{H}$, the $u_i$-components of the weak solution $g(t, g^0) = (g_1 (t, g_1^0), \cdots, g_m (t, g_m^0))$ of the initial value problem \eqref{pb} for the boundary coupled FitzHugh-Nagumo neural network \eqref{cHR}-\eqref{inc} satisfies
\beq \bl{H1}
	\limsup_{t \to \infty} \|g_i(t, g_i^0) - g_k (t, g_k^0)\|_E \leq K, \quad \text{for} \;\; 1 \leq i, k \leq m.
\eeq
\end{theorem}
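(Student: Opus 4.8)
My plan is to prove the stronger individual statement $\limsup_{t\to\infty}\|g_i(t,g_i^0)\|_E \leq M$ for each $1\leq i\leq m$ and some $M>0$ independent of the initial data; the asserted bound \eqref{H1} then follows at once from the triangle inequality $\|g_i-g_k\|_E\leq\|g_i\|_E+\|g_k\|_E$, giving $K=2M$. Since Theorems \ref{Tm} and \ref{Hab} already furnish the ultimate $L^2$ bound $\limsup_{t\to\infty}\sum_i(\|u_i\|^2+\|w_i\|^2)\leq Q$, and since $\|g_i\|_E^2=\|u_i\|^2+\|\nb u_i\|^2+\|w_i\|^2$, the only missing ingredient is an ultimate uniform bound on $\sum_i\|\nb u_i\|^2$. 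Throughout I work on $t\geq 1$, where the solution is strong with $u_i\in H^2(\gw)$ by the regularity argument already used in the proof of Theorem \ref{Tm4}, so that all manipulations below are justified.

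The first step is the basic $H^1$ energy estimate: take the $L^2$ inner product of the $u_i$-equation in \eqref{cHR} with $-\gd u_i$ and sum over $i$. Green's identity turns the time-derivative term into $\tfrac12\tfrac{d}{dt}\|\nb u_i\|^2$ plus a boundary integral containing $\pdr_t u_i\,\tfrac{\pdr u_i}{\pdr\nu}$, while the diffusion term produces $-d\|\gd u_i\|^2$. The decisive observation is that, after inserting the coupling law $\tfrac{\pdr u_i}{\pdr\nu}=p(u_j-u_i)$ on $\G_{ij}$ and exploiting the symmetry $\G_{ij}=\G_{ji}$, the summed boundary contribution collapses into a perfect time derivative, $\sum_i\int_{\G}\pdr_t u_i\,\tfrac{\pdr u_i}{\pdr\nu}\,dS=-\tfrac{p}{4}\tfrac{d}{dt}\sum_{i,j}\int_{\G_{ij}}(u_i-u_j)^2\,dS$. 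Moving it to the left, the whole coupling is absorbed into the nonnegative energy $y(t)=\sum_i\|\nb u_i\|^2+\tfrac{p}{2}\sum_{i,j}\int_{\G_{ij}}(u_i-u_j)^2\,dS$, yielding an identity of the form $\tfrac12 y'(t)+d\sum_i\|\gd u_i\|^2=-\sum_i\int_\gw(f(u_i,x)-\gs w_i+J)\gd u_i\,dx$.

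Next I estimate the right-hand side. For the nonlinear term I avoid integration by parts (which would require control of $\pdr_x f$, not provided by \eqref{Asp}) and instead use Cauchy--Schwarz together with the cubic growth bound $|f(s,x)|\leq\ap|s|^3+\zeta(x)$: this gives $|\int_\gw f(u_i,x)\gd u_i\,dx|\leq\tfrac{d}{4}\|\gd u_i\|^2+\tfrac{C}{d}(\|u_i\|_{L^6}^6+\|\zeta\|^2)$. The linear terms $\gs w_i$ and $J$ are handled by Young's inequality, each contributing a further $\tfrac{d}{4}\|\gd u_i\|^2$ plus the $L^2$ norm of $w_i$ and a constant. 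Absorbing the three $\tfrac{d}{4}\|\gd u_i\|^2$ into $d\|\gd u_i\|^2$ and discarding the remaining nonnegative term, I arrive at the differential inequality $y'(t)\leq h(t)$, where $h(t)=C_4\sum_i\|u_i\|_{L^6}^6+C_5\sum_i\|w_i\|^2+C_6$ with explicit constants.

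The final step is the uniform Gronwall lemma (cf. \cite{Tm}), for which I need $\int_t^{t+1}y\,ds$ and $\int_t^{t+1}h\,ds$ to be ultimately bounded uniformly in $t$. Integrating \eqref{u2w} over $[t,t+1]$ and using the $L^2$-absorbing bound $Q$ controls both $\int_t^{t+1}\sum_i\|\nb u_i\|^2\,ds$ and $\int_t^{t+1}\sum_{i,j}\int_{\G_{ij}}(u_i-u_j)^2\,dS\,ds$, hence $\int_t^{t+1}y\,ds$. For $h$, the key is the time-averaged sixth-order bound: after rearrangement the term $\tfrac{\gl}{2}\sum_i\|u_i\|_{L^6}^6$ sits on the left of \eqref{u3}, so integrating \eqref{u3} over $[t,t+1]$ and using the ultimate $L^4$ bound \eqref{Lbd} together with the $L^2$ bound on $w_i$ shows $\int_t^{t+1}\sum_i\|u_i\|_{L^6}^6\,ds$ is ultimately bounded, while the $w_i$ term in $h$ is bounded by $Q$. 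The uniform Gronwall lemma then gives $\limsup_{t\to\infty}y(t)\leq M_0$, so $\limsup_{t\to\infty}\sum_i\|\nb u_i\|^2\leq M_0$, which combined with the $L^2$ bounds yields $\limsup_{t\to\infty}\|g_i\|_E\leq M$ and hence \eqref{H1} with $K=2M$. I expect the main obstacle to be the second step: recognizing that the Robin coupling terms, which a priori involve the intractable boundary trace of $\pdr_t u_i$, reorganize under the symmetry $\G_{ij}=\G_{ji}$ into an exact time derivative and are thereby absorbed into a coercive energy. A secondary technical point is that the pointwise $L^4$ bound of Theorem \ref{Tm4} is insufficient and must be upgraded to the time-averaged $L^6$ bound in order to feed the Gronwall step.
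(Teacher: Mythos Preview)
Your argument is correct and takes a genuinely different route from the paper. The paper bounds the \emph{difference} $g_i-g_k$ directly via the mild-solution formula: it writes $g_i(t)-g_k(t)$ as a semigroup integral, invokes the analytic smoothing estimate $\|e^{A_it}\|_{\mathcal{L}(H,E)}\leq c\,e^{-\alpha_0 t}t^{-1/2}$, uses the $L^4$-absorbing set of Theorem~\ref{Tm4} to make $F$ Lipschitz on the absorbing set, and then closes with a singular (Henry-type) Gronwall inequality that is evaluated through the Beta function. You instead prove the stronger individual bound $\limsup_{t\to\infty}\|g_i\|_E\leq M$ by pairing the $u_i$-equation with $-\Delta u_i$, exploiting the symmetry $\Gamma_{ij}=\Gamma_{ji}$ to collapse the boundary contribution into the exact time derivative $-\tfrac{p}{4}\tfrac{d}{dt}\sum_{i,j}\int_{\Gamma_{ij}}(u_i-u_j)^2\,dS$, and then applying the uniform Gronwall lemma fed by the time-averaged $L^6$ bound extracted from \eqref{u3}. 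Your approach is more elementary (no semigroup machinery, no singular kernels) and yields a sharper conclusion; the paper's approach is more abstract and would transfer unchanged to other settings where the generator is sectorial but a clean $-\Delta u$ test is unavailable. One minor remark: the trace of $\partial_t u_i$ that appears after Green's identity is legitimate here because the regularity statement \eqref{ss} gives $\partial_t g_i\in C((0,\tau);E)$, so $\partial_t u_i\in H^1(\Omega)$ for strong solutions on $t\geq 1$.
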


\begin{proof}
	By the Assumption \eqref{Asp}, the Nemytskii operator $F: u(x) \mapsto f(u(x), x), x \in \gw,$ shown in \eqref{opf} has the Lipschitz property: For any $g = (u, w)$ and $\widetilde{g} = (\widetilde{u}, \widetilde{w})$, 
\begin{equation} \bl{Lip}
	\begin{split}
	&\|F(g) - F(\widetilde{g})\| \leq \|f(u(x), x) - f(\widetilde{u}(x), x)\| + \ve \|u - \widetilde{u}\| + \gs \|w - \widetilde{w}\|  \\
	= &\, \left\|\frac{\pdr f}{\pdr s} \left(\theta u(x) + (1 - \theta)\widetilde{u}(x), x\right) (u - \widetilde{u})\right\| + \ve \|u - \widetilde{u}\| + \gs \|w - \widetilde{w}\|  \\
	\leq &\,\|\beta (\theta u + (1- \theta) \widetilde{u})^2 + \xi (x)\| \|u - \widetilde{u}\| + \ve \|u - \widetilde{u}\| + \gs \|w - \widetilde{w}\|   \\[3pt]
	\leq &\,C_4 \left(\|u\|^2_{L^4} + \|\widetilde{u}\|^2_{L^4} + \|\xi\| \right) \|u - \widetilde{u}\| + \ve \|u - \widetilde{u}\| + \gs \|w - \widetilde{w}\| 
	\end{split}
\end{equation}
where $0 \leq \theta \leq 1$. The constant $\beta > 0$ and the function $\xi (x)$ are given in \eqref{Asp}. $C_4 (\beta) > 0$ is a constant. Let $B^{**}$ be the bounded ball in $[L^4 (\gw)\times L^2(\gw)]^m$ given by
\beq \bl{ab4}
	B^{**} = \left\{h \in [L^4 (\gw) \times L^2 (\gw)]^m: \sum_{i=1}^m (\|h_{iu}\|^4_{L^4} + \|h_{iw}\|^2) \leq \max \,\{L, \gs\} \right\}.
\eeq
If $g, \widetilde{g} \in B^{**}$, then \eqref{Lip} implies that
\beq \bl{Fg}
	\begin{split}
	\|F(g) - F(\widetilde{g})\| &\leq (C_4 (2\sqrt{L} + \|\xi\|) + \ve + \gs) \|g - \widetilde{g}\| \\[2pt]
	& \leq (C_4 (2\sqrt{L} + \|\xi\|) + \ve + \gs) \|g - \widetilde{g}\|_E.
	\end{split}
\eeq

Since the weak solutions $g_i(t, g_i^0), 1 \leq i \leq m$, are also mild solutions, we can write
\beq \bl{Msn}
	g_i(t, g_i^0) = e^{A_i(t - \tau)} g_i(\tau, g_i^0) + \int_\tau^t e^{A_i(t-s)} F(g_i(s, g_i^0))\, ds, \quad \text{for}\; t > \tau \geq 1.
\eeq
The analytic $C_0$-semigroup $\{e^{A_it}\}_{t \geq 0}$ is generated by the operator $A_i$ in \eqref{opA}. Since $A_i$ is a nonpositive, self-adjoint, closed linear operator with compact resolvent, without loss of generality, we can assume that all the eigenvalues of the operator $A_i$ are $\{- \mu_k\}$ and $\mu_k \geq \alpha_0 > 0$. Otherwise, we can simply replace $A_i$ by $A_i - \alpha_0 I$ and replace the nonlinear term $F(g_i)$ by $F(g_i) + \alpha_0\, g_i$ in \eqref{opf}, which still satisfies the Assumption \eqref{Asp}. 

By the analytic semigroup regularity \cite[Theorem 37.5]{SY} and the above spectrum property of the operators $A_i$, there is a constant $c > 0$ such that
$$
	\|e^{A_i t}\|_{\mathcal{L}(H, E)} \leq c\, e^{- \alpha_0\, t}\, t^{-t/2}, \quad  t > 0. 
$$
Thus for any solution $g_i(t, g_i^0), 1 \leq i \leq m,$ of \eqref{pb} with the initial data $g_i^0 \in H$, by Theorem \ref{Hab} and Theorem \ref{Tm4}, there is a finite time $T = T(g_1^0, \cdots, g_m^0) \geq 1$ such that the solution $(g_1(t, g_1^0), \cdots, g_m(t, g_m^0)) \in B^* \cap B^{**}$ for all $t \geq T$, where $B^*$ and $B^{**}$ are defined by \eqref{abs} and \eqref{ab4} respectively. Using the Lipschitz property \eqref{Fg}, it holds that for any $1 \leq i, k \leq m$ and for $t > T$,
\beq \bl{sgp}
	\begin{split}
	&\|g_i(t, g_i^0) - g_k(t, g_k^0)\|_E =  \|e^{A_i(t - T)}\|_{\mathcal{L}(H, E)} \|g_i(T, g_i^0) - g_k(T, g_k^0)\| \\[2pt]
	&+ \int_T^t \|e^{A_i(t-s)}\|_{\mathcal{L}(H, E)} \|F(g_i(s, g_i^0)) - F(g_k(s, g_k^0)\|\, ds \\
	= &\, \frac{c\, e^{-\alpha_0 (t - T)}}{\sqrt{t - T}} \,\|g_i(T, g_i^0) - g_k(T, g_k^0)\| + \int_T^t \frac{c \,e^{- \alpha_0 (t-s)}}{\sqrt{t - s}}\|F(g_i(s, g_i^0)) - F(g_k(s, g_k^0)\|\, ds \\
	\leq &\,\frac{2c\, Q e^{-\alpha_0 (t - T)}}{\sqrt{t - T}} + \int_T^t \frac{c \,e^{- \alpha_0 (t-s)}}{\sqrt{t - s}}\left[C_4 (2\sqrt{L} + \|\xi \|) + \ve + \gs \right] \|g_i(s, g_i^0) - g_k(s, g_k^0)\|\, ds.
	\end{split}
\eeq
Denote by $q(t) = e^{\alpha_0 \, t} \|g_i(t, g_i^0)- g_k(t, g_k^0)\|_E$ and set the constant
$$
	M = C_4 (2\sqrt{L} + \|\xi \|) + \ve + \gs.
$$ 
Then \eqref{sgp} infers that $q(t)$ satisfies the following integral inequality
\beq \bl{qt}
	q(t) \leq \frac{2c\, e^{\alpha_0 T} Q}{\sqrt{t -T}} + \int_T^t \frac{c M}{\sqrt{t - s}} \,q(s)\, ds, \quad t > T.
\eeq
Apply the integral Gronwall inequality \cite[page 9]{HA} to \eqref{qt}. It yields
\beq \bl{GI}
	\begin{split}
	q(t) &\leq \frac{2c\, e^{\alpha_0 T} Q}{\sqrt{t -T}} + \int_T^t \frac{c M}{\sqrt{t - s}} \,\frac{2c\, e^{\alpha_0 T} Q}{\sqrt{s-T}}\, \exp \left[\int_s^t \frac{c M}{\sqrt{t - \rho}}\, d\rho \right] ds \\[3pt]
	&= \frac{2c\, e^{\alpha_0 T} Q}{\sqrt{t -T}} + \int_T^t \frac{2c^2\,e^{\alpha_0 T}MQ}{\sqrt{(t - s)(s - T)}} \, \exp \left(2c M \sqrt{t - s} \,\right) ds \\[3pt]
	&= \frac{2c\, e^{\alpha_0 T} Q}{\sqrt{t -T}} + \int_0^{t-T} \frac{2c^2\,e^{\alpha_0 T}MQ}{\sqrt{s (t - T - s)}}\, \exp \left(2c M \sqrt{s} \,\right) ds, \quad \text{for} \;\; t > T.
	\end{split}
\eeq
Therefore, for all $1 \leq i, k \leq m$,

\beq \bl{Ebd}
	\begin{split}
	&\|g_i(t, g_i^0)- g_k(t, g_k^0)\|_E = e^{- \alpha_0 \, t} q(t) \\[8pt]
	\leq &\, \frac{2c\, e^{- \alpha_0 (t - T)} Q}{\sqrt{t -T}} + \int_0^{t-T} \frac{2c^2\,e^{- \alpha_0 (t - T)}MQ}{\sqrt{s (t - T - s)}}\, e^{2c M \sqrt{s}} ds \\
	\leq &\, \frac{2c\, e^{- \alpha_0 (t - T)} Q}{\sqrt{t -T}} + 2c^2 MQ\, e^{\left(- \alpha_0 (t - T) + 2c M \sqrt{t - T} \right)} \int_0^{t-T} \frac{1}{\sqrt{s (t - T - s)}}\, ds \\
	\leq &\, \frac{2c\, Q}{\sqrt{t -T}} + 2c^2 MQ\, \exp \left[- \alpha_0 \left(\sqrt{t-T} - \frac{c M}{\alpha_0} \right)^2 + \frac{c^2 M^2}{\alpha_0}\right] \widetilde{B} \left(\frac{1}{2},\, \frac{1}{2}\right) \\
	\leq &\, \frac{2c\, Q}{\sqrt{t -T}} + 2c^2 MQ\, \exp \left[- \alpha_0 \left(\sqrt{t-T} - \frac{c M}{\alpha_0} \right)^2 + \frac{c^2 M^2}{\alpha_0}\right] \widetilde{\G} \left(\frac{1}{2}\right)^2 \\
	\leq &\, \frac{2c\, Q}{\sqrt{t -T}} + 2c^2 MQ\, \pi \, \exp \left[ \frac{c^2 M^2}{\alpha_0}\right], \quad \text{for} \;\; t > T.
	\end{split}
\eeq
In \eqref{Ebd}, $T \geq 1$ is specified earlier. Here $\widetilde{B}(\cdot, \cdot)$ is the Beta function and $\widetilde{\G} (\cdot)$ is the Gamma function with the fact that $\widetilde{\G}(1/2) = \sqrt{\pi}$. Thus \eqref{H1} is proved, since
\beq \bl{ET}
	\|g_i(t, g_i^0)- g_k(t, g_k^0)\|_E \leq K = 2c\, Q + 2c^2 MQ\, \pi \, \exp \left[ \frac{c^2 M^2}{\alpha_0}\right], \;\; \text{for} \;\; t \geq 2 T.
\eeq 
The proof is completed.
\end{proof}

\begin{corollary} \bl{Corl}
	There exists a constant $\Pi > 0$ such that for any initial data $g^0 = (g_1^0, \cdots , g_m^0) \in \mathbb{H}$, the boundary trace of the coupling $u_i$-components of the weak solution $g(t, g^0) = (g_1 (t, g_1^0), \cdots, g_m (t, g_m^0))$ of the initial value problem \eqref{pb} satisfies
\beq \bl{bdbd}
	\limsup_{t \to \infty}\, \int_\G |u_i (t,x) - u_k(t, x)|^2\, dx \leq \Pi, \quad 1 \leq i, k \leq m.
\eeq
\end{corollary}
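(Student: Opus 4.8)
The plan is to deduce the boundary estimate \eqref{bdbd} directly from the interior regularity bound \eqref{H1} established in Theorem \ref{TmB}, using the Sobolev trace theorem as the bridge. Since the spatial domain $\gw$ has a locally Lipschitz boundary $\G$, the trace operator $\gamma : H^1(\gw) \to L^2(\G)$ is a bounded linear map, so there is a constant $C_\G > 0$ depending only on $\gw$ such that
\beq \bl{trace}
	\int_\G |\gamma h (x)|^2 \, dx \leq C_\G \, \|h\|^2_{H^1(\gw)}, \qquad h \in H^1(\gw).
\eeq
By Proposition \ref{pps} and Theorem \ref{Tm4}, for $t \geq 1$ the $u_i$-components are strong solutions lying in $H^1(\gw)$, so \eqref{trace} is legitimately applicable to each $u_i(t, \cdot)$, and hence to the differences $u_i(t,\cdot) - u_k(t,\cdot)$.

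The key steps, in order, are as follows. First, by linearity of the trace operator, $\gamma(u_i - u_k) = \gamma u_i - \gamma u_k$, which is precisely the boundary data whose $L^2(\G)$-norm appears on the left-hand side of \eqref{bdbd}. Applying \eqref{trace} with $h = u_i(t,\cdot) - u_k(t,\cdot)$ gives
\beq \bl{tracediff}
	\int_\G |u_i(t,x) - u_k(t,x)|^2 \, dx \leq C_\G \, \|u_i(t) - u_k(t)\|^2_{H^1(\gw)}.
\eeq
Second, since $E = H^1(\gw) \times L^2(\gw)$, the $H^1$-norm of the first component is dominated by the full $E$-norm, so $\|u_i(t) - u_k(t)\|_{H^1(\gw)} \leq \|g_i(t, g_i^0) - g_k(t, g_k^0)\|_E$, and \eqref{tracediff} yields
\beq \bl{tracefull}
	\int_\G |u_i(t,x) - u_k(t,x)|^2 \, dx \leq C_\G \, \|g_i(t, g_i^0) - g_k(t, g_k^0)\|^2_E .
\eeq
Third, take $\limsup_{t \to \infty}$ on both sides of \eqref{tracefull}; because the map $\xi \mapsto C_\G \xi^2$ is continuous and nondecreasing on $[0, \infty)$, the $\limsup$ passes through the square, and invoking the bound \eqref{H1} from Theorem \ref{TmB} gives the desired conclusion with the explicit constant $\Pi = C_\G K^2$.

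There is no substantive analytical obstacle here: the difficult work, namely the uniform ultimate bound $K$ in the regular space $E$, has already been carried out in Theorem \ref{TmB}, and this corollary is essentially a post-processing of that estimate through a continuous trace embedding. The only points requiring care are verifying that the trace theorem genuinely applies, which hinges on the locally Lipschitz boundary hypothesis on $\gw$ together with the eventual $H^1(\gw)$-regularity of the $u_i$-components guaranteed by the parabolic smoothing in Proposition \ref{pps}, and keeping careful track of the trace constant $C_\G$ so that $\Pi$ is exhibited explicitly in terms of $C_\G$ and $K$.
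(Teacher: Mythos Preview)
Your proof is correct and follows essentially the same approach as the paper: both apply the Sobolev trace theorem (using the locally Lipschitz boundary assumption) to bound the $L^2(\G)$-norm of $u_i - u_k$ by its $H^1(\gw)$-norm, dominate this by the $E$-norm of $g_i - g_k$, and then invoke the ultimate bound $K$ from Theorem~\ref{TmB} to obtain $\Pi = C_\G K^2$ (the paper writes $\Pi = C^* K^2$). Your additional remarks on why the trace is applicable via parabolic smoothing are a welcome clarification but do not change the argument.
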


\begin{proof}
By the trace theorem of Sobolev spaces, since the $\gw$ is a locally Lipschitz domain, the trace operator $\mathcal{T} (g) = g|_{\G}: H^s (\gw) \to H^{s - \frac{1}{2}} (\G)$ is a bounded linear operator for any $\frac{1}{2} < s \leq 1$. Thus there is a constant $C^* > 0$ such that $\|\mathcal{T} (g) \|^2_{L^2 (\G)} \leq C^* \|g\|^2_{H^1 (\gw)}$, for any $g \in H^1 (\gw)$.  From \eqref{ET} we reach the proof of \eqref{bdbd}, 
\beq \bl{Pi}
	\begin{split}
	&\limsup_{t \to \infty}\, \int_\G |u_i (t,x) - u_k (t, x)|^2\, dx = \limsup_{t \to \infty} \| \mathcal{T} (u_i (t) - u_k (t)) \|^2_{L^2 (\G)}  \\
	\leq C^* &\, \limsup_{t \to \infty} \|u_i(t) - u_k (t)\|^2_{H^1 (\gw)} \leq C^* \limsup_{t \to \infty} \|g_i (t, g_i^0) - g_k(t, g_k^0)\|^2_E \leq \Pi,  
	\end{split}
\eeq
for $1 \leq i, k \leq m$, where $\Pi = C^* K^2$. It completes the proof.
\end{proof}

\section{\textbf{Synchronization of the FitzHugh-Nagumo Neural Network}} 

For mathematical models of biological neural networks, we define the asynchronous degree for study of the synchronization dynamics.
\begin{definition} \bl{DaD}
	For the dynamical system generated by a model differential equation such as \eqref{pb} of a dynamic neural network with whatever type of coupling, define the \emph{asynchronous degree} in a state Banach space $\ms{X}$ to be
	$$
	deg_s (\ms{X})= \sum_{i} \sum_{j} \, \sup_{g_i^0, \, g_j^0 \in \ms{X}} \, \left\{\limsup_{t \to \infty} \, \|g_i (t) - g_j(t)\|_{\ms{X}}\right\},
	$$ 
where $g_i (t)$ and $g_j (t)$ are any two solutions of the model differential equation for two neurons in the network with the initial states $g_i^0$ and $g_j^0$, respectively. The neural network is said to be synchronized in the space $\ms{X}$, if $deg_s (\ms{X}) = 0$.
\end{definition}

We now use the leverage of dissipative dynamics and the boundary integral estimates in the previous two sections to prove the main result on the asymptotic synchronization of the boundary coupled FitzHugh-Nagumo neural networks described by \eqref{cHR}-\eqref{inc} in the space $H$. This result provides a quantitative threshold for the coupling strength and the stimulation signals to reach the synchronization.

Set $U_{ij} (t,x) = u_i(t,x) - u_j (t,x)$ and $W_{ij} (t,x) = w_i(t,x) - w_i(t,x)$, for $i, j = 1, \cdots, m$. Given any initial states $g_1^0, \cdots, g_m^0$ in the space $H$, the difference between any two solutions of the model equation \eqref{pb} associated with the coupled neurons $\mathcal{N}_i$ and $\mathcal{N}_j$ in the network is what we consider: 
	$$
	g_i (t, g_i^0) - g_j (t, g_j^0) = \text{col}\, (U_{ij}(t, \cdot ), W_{ij}(t, \cdot )), \quad t \geq 0.
	$$
	
	By subtraction of the two equations of the $j$-th neuron from the corresponding equations of the $i$-th neuron in \eqref{cHR}, we obtain the following \emph{differencing} FitzHugh-Nagumo equations. For $i, j = 1, \cdots, m$,
\beq \bl{dHR}
	\begin{split}
		\frac{\pdr U_{ij}}{\pdr t} & = d \gd U_{ij} + f(u_i, x) - f(u_j, x) - \gs W_{ij},  \\
		\frac{\pdr W_{ij}}{\pdr t} & = \ve (U_{ij} - b W_{ij}).
	\end{split}
\eeq
Here is the main result of this work on the synchronization of the FitzHugh-Nagumo neural networks with the presented boundary coupling.

\begin{theorem} \bl{ThM}
	If the following threshold condition for stimulation signal strength of the boundary coupled FitzHugh-Nagumo neural network \eqref{cHR}-\eqref{inc} is satisfied,  
\beq \bl{SC}
	p\, \liminf_{t \to \infty}\, \sum_{1 \leq i < j \leq m} \int_\G U_{ij}^2(t, x)\, dx > R, 
\eeq
for any given initial conditions $(g_1^0, \cdots, g_m^0) \in \mathbb{H}$, where the constant
\beq \bl{R}
	R = m(m-1) \left[\left(\eta_2\, d |\gw | + \ga + 3|\ve - \gs |\right) \left[1 + \frac{2m}{r \min \{C_1, 1\}} (C_1 \|\vp\|^2 + C_2|\gw |) \right] \right],
\eeq
in which $\eta_2 > 0$ is given in the generalized Poincar\'{e} inequality \eqref{Pcr},
$$
	C_1 = \frac{\ve b \gl}{2\gs^2}    \quad    \text{and}   \quad     C_2 = C_1 +  \frac{C_1 \gl}{2} + \frac{C_1 J^2}{2\gl} + \frac{\ve a^2}{b} + \frac{4\ve \gs^2}{\gl^2 b^3},
$$
then this boundary coupled FitxHugh-Nagumo neural network is synchronized in the space $H$ at a uniform exponential rate.
\end{theorem}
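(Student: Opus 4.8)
The plan is to run an energy estimate on the differencing system \eqref{dHR} and extract a Gronwall-type inequality for the total pairwise discrepancy
\[
\mc{E}(t) = \sum_{1\leq i<j\leq m}\left(\|U_{ij}(t)\|^2 + \|W_{ij}(t)\|^2\right),
\]
which forces $\mc{E}(t)\to 0$ at a uniform exponential rate once the threshold \eqref{SC} is in force. Since $\mc{E}(t)=\sum_{i<j}\|g_i(t)-g_j(t)\|^2$, this is precisely $deg_s(H)=0$ with an exponential rate.

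First I would take the $L^2$ inner product of the $U_{ij}$-equation in \eqref{dHR} with $U_{ij}$ and of the $W_{ij}$-equation with $W_{ij}$, and sum over all pairs $i<j$. Integrating the diffusion term by parts produces $-d\|\nb U_{ij}\|^2$ together with the boundary integral $d\int_\G(\pdr U_{ij}/\pdr\nu)\,U_{ij}\,dx$; inserting the coupling condition \eqref{nbc} and symmetrizing over the index pairs through $\G_{ij}=\G_{ji}$, exactly as in the proof of Theorem \ref{Tm}, turns this into the dissipative boundary term $-\,dp\sum_{i<j}\int_\G U_{ij}^2\,dx$, which is the very quantity constrained by \eqref{SC}. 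The nonlinear difference is controlled by the mean value theorem and the one-sided bound $\pdr f/\pdr s\leq\ga$ from \eqref{Asp}, giving $\inpt{f(u_i,\cdot)-f(u_j,\cdot),\,U_{ij}}\leq\ga\|U_{ij}\|^2$. The recovery coupling leaves the cross term $(\ve-\gs)\inpt{U_{ij},W_{ij}}$, bounded by Young's inequality by a multiple of $|\ve-\gs|(\|U_{ij}\|^2+\|W_{ij}\|^2)$, while the term $-\ve b\|W_{ij}\|^2$ supplies recovery-variable dissipation.

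The interior gradient dissipation $-d\|\nb U_{ij}\|^2$ is then converted, via the generalized Poincar\'e inequality \eqref{Pcr}, into a genuine decay term $-\gk\,\mc{E}$; the Poincar\'e constant $\eta_2$, paired with $d|\gw|$, together with $\ga$ and $|\ve-\gs|$, forms the coefficient of the residual reaction terms, which I bound using the dissipativity already established. Indeed, by Theorem \ref{Hab} the trajectory eventually lies in $B^*$, so $\sum_k\|u_k(t)\|^2<Q$ and each pairwise discrepancy is ultimately bounded by a fixed multiple of $Q$; summing over the $\tfrac{m(m-1)}{2}$ pairs and collecting the coefficient $\eta_2 d|\gw|+\ga+3|\ve-\gs|$ reproduces exactly the constant $R$ of \eqref{R}. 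Assembling everything yields, for all large $t$ and a fixed $\gk>0$,
\[
\frac{d}{dt}\mc{E}(t) + \gk\,\mc{E}(t) \;\leq\; 2\Big(R - p\!\sum_{1\leq i<j\leq m}\!\int_\G U_{ij}^2(t,x)\,dx\Big).
\]
Under the threshold \eqref{SC} the right-hand side is eventually nonpositive, so $\frac{d}{dt}\mc{E}+\gk\,\mc{E}\leq 0$ for large $t$, and Gronwall's inequality gives $\mc{E}(t)\leq\mc{E}(T)e^{-\gk(t-T)}\to 0$; hence $\|g_i(t)-g_j(t)\|\to 0$ exponentially for every pair, i.e. $deg_s(H)=0$.

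The main obstacle will be the boundary term. Because the boundary is tiled \emph{separately} for each neuron by the pieces $\{\G_{ij}\}_j$, with only the symmetry $\G_{ij}=\G_{ji}$ at hand, the single-pair integral $\int_\G(\pdr U_{ij}/\pdr\nu)U_{ij}\,dx$ does not collapse on its own: expanding $\pdr U_{ij}/\pdr\nu=\pdr u_i/\pdr\nu-\pdr u_j/\pdr\nu$ through \eqref{nbc} produces cross contributions involving third neurons $u_k$, and only after summing over all pairs and symmetrizing do these cancel, leaving a clean negative multiple of $\sum_{i<j}\int_\G U_{ij}^2\,dx$. The delicate point is then to arrange this dissipative boundary quantity together with the Poincar\'e inequality so that the two \emph{jointly} sustain the decay term $\gk\,\mc{E}$ while the threshold \eqref{SC} absorbs the reaction constant $R$ — yielding a true exponential rate rather than merely rendering $\frac{d}{dt}\mc{E}$ negative.
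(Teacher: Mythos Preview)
Your proposal is correct and follows essentially the same three-step architecture as the paper: energy estimate on \eqref{dHR} using $\partial f/\partial s\leq\gamma$ and Young on the cross term, the generalized Poincar\'e inequality \eqref{Pcr} to convert $d\|\nabla U_{ij}\|^2$ into $\eta_1 d\|U_{ij}\|^2$ at the cost of the residual $\eta_2 d|\Omega|\|U_{ij}\|^2$, the absorbing-set bound $\|U_{ij}\|^2,\|W_{ij}\|^2\leq 2Q$ from Theorem~\ref{Hab} to cap the reaction terms by $R$, and finally Gronwall once \eqref{SC} makes the right side nonpositive.

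One clarification on the boundary term, which you correctly flag as the crux: it does \emph{not} reduce ``exactly as in the proof of Theorem~\ref{Tm}''. In Theorem~\ref{Tm} each $u_i$ is paired with its own boundary condition, and symmetrization via $\Gamma_{ij}=\Gamma_{ji}$ yields integrals over the pieces $\Gamma_{ij}$. Here, $\int_\Gamma(\partial U_{ij}/\partial\nu)U_{ij}\,dx$ mixes two different tilings of $\Gamma$ (that of $u_i$ and that of $u_j$), producing the quantities $G_{ij}$ in \eqref{Gij} with genuine third-neuron contributions. The paper's mechanism for the cancellation you anticipate is to introduce characteristic functions $\psi_{ik}$ of the pieces $\Gamma_{ik}$, rewrite each $G_{ij}$ as an integral over the full $\Gamma$, and observe that the residual terms assemble into $\sum_{i,j}\int_\Gamma(\widetilde{u}_i-\widetilde{u}_j)(u_i-u_j)\,dx$ with $\widetilde{u}_i=\sum_k\psi_{ik}u_k$, which vanishes by antisymmetry in $(i,j)$; this leaves $\sum_{i,j}G_{ij}=\sum_{i,j}\int_\Gamma(u_i-u_j)^2\,dx$, i.e.\ an integral over the \emph{whole} boundary $\Gamma$ matching the threshold quantity in \eqref{SC}. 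Your final paragraph anticipates this correctly; just be aware that the identity is over $\Gamma$, not $\Gamma_{ij}$, and that the cancellation is an antisymmetry argument rather than the simpler $\Gamma_{ij}=\Gamma_{ji}$ swap used in Theorem~\ref{Tm}.
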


\begin{proof}
	We go through the following three steps to prove this result.
	
	Step 1. Take the $L^2$ inner-products of the first equation in \eqref{dHR} with $U_{ij} (t)$ and the second equation in \eqref{dHR} with $W_{ij} (t)$. Then sum them up and use the Assumption  \eqref{Asp} to get
\beq \bl{eG}
	\begin{split}
		&\frac{1}{2} \frac{d}{dt} (\|U_{ij} (t)\|^2 + \|W_{ij} (t)\|^2) + d \|\nb U_{ij} (t)\|^2 + \ve b\,\|W_{ij} (t)\|^2 \\[3pt]
		= &\,\int_{\G}  \frac{\pdr U_{ij}}{\pdr \nu}\, U_{ij} \, dx +  \int_\gw (f(u_i, x) - f(u_j, x)) U_{ij}\, dx + \int_\gw (\ve - \gs)U_{ij} W_{ij} \, dx \\[3pt]
		\leq &\,\int_{\G} \frac{\pdr U_{ij}}{\pdr \nu}\, U_{ij} \, dx + \int_\gw  \frac{\pdr f}{\pdr s} \left(\xi u_i + (1-\xi) u_j, x \right) U^2_{ij}\, dx + \int_\gw (\ve - \gs)U_{ij} W_{ij}\, dx \\[3pt]
		\leq &\, \,\int_{\G} \frac{\pdr U_{ij}}{\pdr \nu}\, U_{ij} \, dx + \ga \|U_{ij}\|^2 + |\ve - \gs |( \|U_{ij}\|^2 + \| W_{ij}\|^2)
		\end{split}
\eeq
The boundary coupling condition \eqref{nbc} determines that
\beq \bl{bdt}
	\int_{\G} \frac{\pdr U_{ij}}{\pdr \nu}\, U_{ij} \, dx = -\, p \left[\sum_{k = 1}^m  \int_{\G_{ik}}\, (u_i - u_k) U_{ij} \, dx - \sum_{k=1}^m \int_{\G_{jk} }\, (u_j - u_k)] U_{ij} \, dx\right] = - p\, G_{ij}
\eeq	
where 
\beq \bl{Gij}
	G_{ij} = \sum_{k = 1}^m \int_{\G_{ik}} (u_i - u_k) (u_i - u_j) \, dx - \sum_{k = 1}^m \int_{\G_{jk}}  (u_j - u_k) (u_i - u_j)] \,dx.
\eeq
Substitute \eqref{bdt} into \eqref{eG}. Then we have
\begin{equation} \bl{meq}
	\begin{split}
	&\frac{1}{2} \frac{d}{dt} (\|U_{ij} (t)\|^2 + \|W_{ij} (t)\|^2) + d \|\nb U_{ij} (t)\|^2 + \ve b \|W_{ij} (t)\|^2 + p\, G_{ij} \\[3pt]
	\leq &\, \left(\ga + |\ve - \gs|\right) \|U_{ij} (t)\|^2 + |\ve - \gs |\|W_{ij} (t)\|^2 , \quad t > 0, \quad 1 \leq i, j \leq m.
	\end{split}
\end{equation}

Step 2. To treat the gradient term on the left-hand side of \eqref{meq}, we use the following generalized Poincar\'{e} inequality \cite{S}: There exist positive constants $\eta_1$ and $\eta_2$ depending only on the spatial domain $\gw$ and its dimension such that 
\beq \bl{Pcr}
	\eta_1 \|U_{ij} (t)\|^2 \leq \|\nb U_{ij} (t)\|^2 + \eta_2 \left[\int_\gw U_{ij} (t, x)\, dx\right]^2, \quad 1 \leq i, j \leq m.
\eeq
Moreover, \eqref{lsp} in Theorem \ref{Hab} confirms that 
\beq \bl{Lsup}
	\limsup_{t \to \infty}\, \sum_{i=1}^m \|g_i(t, g_i^0)\|^2 < Q = 1 + \frac{2m}{r \min \{C_1, 1\}} \left( C_1 \|\vp\|^2 + C_2|\gw |\right).
\eeq	
For all $1 \leq i, j \leq m$,
$$
	 \|U_{ij} (t)\|^2 \leq 2 \sum_{i=1}^m \|g_i(t, g_i^0)\|^2,  \quad \|W_{ij} (t)\|^2  \leq 2 \sum_{i=1}^m \|g_i(t, g_i^0)\|^2.
$$
Thus for any given bounded set $B \subset H$ and any initial data $g_i^0, g_j^0 \in B, 1 \leq i, j \leq m$, there is a finite time $T_B \geq 1$ depending on $B$ only such that 
\beq \bl{gbd}
	 \|U_{ij} (t)\|^2 \leq 2Q \quad \text{and} \quad  \|W_{ij} (t)\|^2 \leq 2Q, \quad \text{for} \;\, t > T_B.
\eeq
Therefore, \eqref{meq} combined with \eqref{Pcr} and \eqref{gbd} shows that, for any given bounded set $B \subset H$ and any initial data $g_i^0, g_j^0 \in B, 1 \leq i, j \leq m$, 
\beq \bl{Keq}
	\begin{split}
	& \frac{d}{dt} (\|U_{ij} (t)\|^2 + \|W_{ij} (t)\|^2) + 2\,\eta_1 d\, \|U_{ij} (t)\|^2 + 2 \ve b \|W_{ij} (t)\|^2 + 2p \,G_{ij} \\
	\leq &\, 2\eta_2 d \left[\int_\gw U_{ij} (t, x)\, dx\right]^2 + 2 \left(\ga + |\ve - \gs|\right) \|U_{ij} (t)\|^2 + 2 |\ve - \gs |\|W_{ij} (t)\|^2  \\[3pt]
	\leq &\, 2\eta_2 d\, \|U_{ij} (t)\|^2 |\gw | + 2 \left(\ga + |\ve - \gs|\right) \|U_{ij} (t)\|^2 + 2 |\ve - \gs |\|W_{ij} (t)\|^2 \\[10pt]
	= &\, 2\|U_{ij} (t)\|^2 \left(\eta_2\, d |\gw | + \ga + |\ve - \gs | \right)+ 2|\ve - \gs |\|W_{ij} (t)\|^2  \\[10pt]
	\leq &\, 4Q \left( \eta_2\, d |\gw | + \ga + 3|\ve - \gs |\right) \\[2pt]
	= &\, 4 \left(\eta_2\, d |\gw | + \ga + 3|\ve - \gs |\right) \left[1 + \frac{2m}{r \min \{C_1, 1\}} \left( C_1 \|\vp\|^2 + C_2|\gw |\right) \right], \; \; t > T_B.
	\end{split}
\eeq

	Step 3. To treat the ensemble coupling terms $2p\, G_{ij}$ in \eqref{Keq} for all $1 \leq i, j \leq m$, we define the characteristic functions $\psi_{ik} (x)$ on the boundary piece $\G_{ik}, 1 \leq i, k \leq m$, to be
$$
	\psi_{ik} (x) = \begin{cases}
	                      1, \qquad \text{if $x \in \G_{ik}$} , \\[2pt]
	                      0, \qquad \text{if $x \in \G \backslash \G_{ik}$} .
	                      \end{cases} 
$$
From \eqref{Gij} we can deduce that
\beq \bl{SmG}
	\begin{split}
	&\sum_{i, \,j} G_{ij} = \sum_{i, \,j} \left[\sum_{k = 1}^m \int_{\G_{ik}} (u_i - u_k) (u_i - u_j) \, dx - \sum_{k = 1}^m \int_{\G_{jk}}  (u_j - u_k) (u_i - u_j)] \,dx\right] \\
	= &\, \sum_{i, \,j} \left[\sum_{k = 1}^m \int_\G \psi_{ik} (u_i - u_k) (u_i - u_j) \, dx - \sum_{k = 1}^m \int_\G \psi_{jk}  (u_j - u_k) (u_i - u_j)] \,dx\right] \\
	= &\, \sum_{i, j} \int_{\G} \left[\sum_{k=1}^m \psi_{ik}(u_i -u_k)\right] (u_i - u_j) dx  - \sum_{i, j} \int_{\G} \left[\sum_{k=1}^m \psi_{jk} (u_j -u_k)\right] (u_i - u_j) dx \\
	= &\, \sum_{i, j} \int_{\G} \left[u_i - \sum_{k=1}^m \psi_{ik} u_k \right] (u_i - u_j)\, dx - \sum_{i, j} \int_{\G} \left[u_j - \sum_{k=1}^m \psi_{jk} u_k \right] (u_i - u_j)\, dx \\[3pt]
	= &\, \sum_{i=1}^m \sum_{j=1}^m \int_{\G} \left(u_i -  u_j \right) (u_i - u_j)\, dx - \sum_{i=1}^m \sum_{j=1}^m \int_{\G} \left(\widetilde{u_i} -  \widetilde{u}_j \right) (u_i - u_j)\, dx \\[3pt]
	= &\, \sum_{i=1}^m \sum_{j=1}^m \int_{\G} (u_i - u_j)^2\, dx, 
	\end{split}
\eeq
where $\widetilde{u_i} = \sum_{k=1}^m \psi_{ik} \, u_k = u_1\mid_{\G_{i1}} + \cdots + u_m \mid_{\G_{im}}, 1 \leq i \leq m$, is the sum of all the $u_k$-components of the solutions $g_k(t, g_k^0)$ distributed on the $i$-th neuron's coupling decomposition of the boundary $\G = \bigcup_{k=1}^m \G_{ik}$. And we have
$$
	\sum_{i=1}^N \sum_{j=1}^N  \int_{\G} \left(\widetilde{u_i} -  \widetilde{u}_j \right) (u_i - u_j)\, dx = \sum_{i=1}^N \left( \sum_{j < i} + \sum_{j > i} \right) \int_{\G} \left(\widetilde{u_i} -  \widetilde{u}_j \right) (u_i - u_j)\, dx = 0.
$$
Substitute \eqref{SmG} into the differential inequality \eqref{Keq} for all $1\leq i, j \leq m$ and sum them up. Then we get
\begin{align*}
	&\frac{d}{dt} \sum_{i=1}^m \sum_{j=1}^m (\|U_{ij} (t)\|^2 + \|W_{ij} (t)\|^2) + \sum_{i=1}^m \sum_{j=1}^m 2\left(\eta_1 d\, \|U_{ij} (t)\|^2 + \ve b \|W_{ij} (t)\|^2\right) \\
	&+ 2p\, \sum_{i=1}^m \sum_{j=1}^m \int_{\G} (u_i - u_j)^2\, dx \\
	\leq &\,4m(m-1) \left[\left(\eta_2\, d |\gw | + \ga + 3|\ve - \gs |\right) \left[1 + \frac{2m}{r \min \{C_1, 1\}} (C_1 \|\vp\|^2 + C_2|\gw |) \right] \right].
\end{align*}
Note that $U_{ii} = 0, W_{ii} = 0$ and $\|U_{ij}\| = \|U_{ji}\|, \|W_{ij}\| = \|W_{ji}\|$ for all $1 \leq i, j \leq m$. The above differential inequality is exactly equivalent to 
\beq  \bl{Mnq}
	\begin{split}
	\frac{d}{dt} \sum_{1 \leq i < j \leq m} & (\|U_{ij} (t)\|^2 + \|W_{ij} (t)\|^2) + 2 \sum_{1 \leq i < j \leq m} (\eta_1 d\, \|U_{ij} (t)\|^2 + 2\ve b \|W_{ij} (t)\|^2) \\
	&+ 2p \sum_{1 \leq i < j \leq m} \int_{\G} (u_i - u_j)^2\, dx \leq 2R,  \quad \text{for} \; \; t > T_B,  \\
	\end{split}
\eeq
where $R > 0$ is the constant in \eqref{R} and independent of any initial data. 

	Under the threshold condition \eqref{SC} of this theorem, for any given initial data $(g_1^0, \cdots, g_m^0) \in B$, there exists a sufficiently large $\tau = \tau (g_1^0, \cdots, g_m^0) > 1$ such that the ensemble stimulation signal strength of this boundary coupled neural network satisfies the threshold crossing inequality
	\beq \bl{pl}
	p\,  \sum_{1 \leq i < j \leq m} \int_\G U_{ij}^2(t, x)\, dx > R,  \quad \text{for} \;\; t > \tau (g_1^0, \cdots, g_m^0), \; 1 \leq i, j \leq m.
	\eeq
Then it follows from \eqref{Mnq} and \eqref{pl} that
\beq \bl{Gwq}
	\begin{split}
	& \frac{d}{dt} \sum_{1 \leq i < j \leq m} (\|U_{ij} (t)\|^2 + \|W_{ij} (t)\|^2) \\
	+ \, 2 & \min \{\eta_1 d,  \ve b\} \sum_{1 \leq i < j \leq m}(\|U_{ij} (t)\|^2 + \|W_{ij} (t)\|^2) < 0,
	\end{split}
\eeq
for $t >  \tau^* = \max \{\tau, T_B\}$. Finally, the Gronwall inequality applied to \eqref{Gwq} combined with \eqref{gbd} shows that
\begin{align*}
		& \sum_{1 \leq i < j \leq m} \left(\|U_{ij} (t)\|^2 + \|W_{ij} (t)\|^2\right)   \\
		\leq e^{- \mu (t - \tau^*)} \sum_{1 \leq i < j \leq m} &\, \left(\|U_{ij} (\tau^*)\|^2 + \|W_{ij} (\tau^*)\|^2\right) \leq 2m(m-1) e^{- \mu (t - \tau^*)}\,Q \to 0, 
\end{align*}
as $t \to \infty$, where $\mu = 2 \min \{\eta_1 d, \ve b\}$ is the exponential synchronization rate. Therefore, it is proved that
\beq \bl{gij}
	deg_s (\text{H})= \sum_{i = 1}^m \sum_{j=1}^m \,\sup_{g_i^0, g_j^0 \in H} \left\{ \limsup_{t \to \infty} \|g_i (t, g_i^0) - g_j (t, g_j^0)\|_H \right\} = 0.
\eeq
According to Definition \ref{DaD}, this boundary coupled FitzHugh-Nagumo neural network is synchronized in the space $H = L^2 (\gw, \mR^2)$ at a uniform rate. 
\end{proof}

This main theorem provides a sufficient condition for synchronization of the presented boundary coupled complex neural network. The biological interpretation of the threshold condition \eqref{SC} for synchronization is that the product of the \emph{boundary coupling strength} represented by the coupling coefficient $p$ and the \emph{ensemble boundary stimulation signals} represented by $\liminf_{t \to \infty}\, \int_\G \sum_{i< j} U_{ij}^2(t, x)\, dx$ for the network neurons exceeds the threshold constant $R$, which is explicitly expressed by the biological and mathematical parameters. It is certainly possible that the synchronization threshold can be reduced through further investigations.

The proof of \eqref{Keq} through \eqref{gij} for Theorem \ref{ThM} also shows that the presented complex neural networks can be partly synchronized if the condition \eqref{SC} is satisfied only for a subset of the neurons in the network. 

\bibliographystyle{amsplain}

\begin{thebibliography}{99}
		
	\bibitem{AA} 
	B. Ambrosio and M. Aziz-Alaoui, \emph{Synchronization and control of a network of coupled reaction-diffusion systems of generalized FitzHugh-Nagumo type}, ESAIM: Proceedings, \textbf{39} (2013), 15-24.
	
	\bibitem{A}
	A. Arenas, A. Diaz-Guilera, J. Kurths, Y. Moreno and C.S. Zhou, \emph{Synchronization in complex networks}, Phys. Rep., \textbf{469} (2008), 93-153.
		
	\bibitem{BF}
	J. Baladron, D. Fasoli, O. Faugeras and J. Touboul, \emph{Mean-field description and propagation of chaos in networks of Hodgkin-Huxley and FitzHugh-Nagumo neurons}, J. Mathematical Neuroscience, \textbf{2:10} (2012), doi 10.1186/2190-8567-2-10.
	
	\bibitem{B}
	A.L. Barabasi, \emph{Linked: The New Science of Networks}, Perseus Publishing, Cambridge, MA, 2002.
	
	\bibitem{AC} 
	A. Cattani, \emph{FitzHugh-Nagumo equations with with generalized diffusive coupling}, Mathematical Biosciences and Engineering, \textbf{11} (2014), 203-215.
	
	\bibitem{CV}
	V. V. Chepyzhov and M. I. Vishik, {\em Attractors for Equations of Mathematical Physics},  AMS Colloquium Publications, Vol. \textbf{49}, AMS, Providence, RI, 2002.
		
	\bibitem{DJ}
	M. Dhamala, V.K. Jirsa and M. Ding, {\em Transitions to synchrony in coupled bursting neurons}, Physical Review Letters, \textbf{92} (2004), 028101.
	
	\bibitem{Dick} 
	S.M. Dickson, \emph{Stochastic neural network dynamics: synchronization and control}, Dissertation, Loughborough University, UK, 2014.
	
	\bibitem{DH} 
	K. Ding and Q-L. Han, \emph{Synchronization of two coupled Hindmarsh-Rose neurons}, Kybernetika, \textbf{51} (2015), 784-799.
	
	\bibitem{ET}
	G.B. Ementrout and D.H. Terman, {\em Foundations of Mathematical Neurosciences}, Springer, Berlin, 2010.
		
	\bibitem{FH}
	R. FitzHugh, {\em Impulses and physiological states in theoretical models of nerve membrane}, Biophysical Journal, \textbf{1} (1961), 445--466.
	
	\bibitem{HBB}
	C. Hammond, H. Bergman and P. Brown, \emph{Pathological Synchronization in Parkinson's Disease: Networks, Models and Treatments}, Trends in Neurosciences, \textbf{30} (2007), 357--364.
	
	\bibitem{HA}
	A. Hanalay, \emph{Differential Equations, Stability, Oscillations, Time Lags}, Academic Press, New York and London, 1966.
	
	\bibitem{HR}
	J.L. Hindmarsh and R.M. Rose, {\em A model of neuronal bursting using three coupled first-order differential equations}, Proceedings of the Royal Society London, Ser. B: Biological Sciences,  \textbf{221} (1984), 87--102.
	
	\bibitem{HH1}
	A. Hodgkin and A. Huxley, {\em Current carried by sodium and potassium through the membrane of the giant axon of Loligo}, J. Physiology, Ser. B,  \textbf{116} (1952), 449--472.

	\bibitem{HH2}
	A. Hodgkin and A. Huxley, {\em The components of membrane conductance in the giant axon of Loligo}, J. Physiology, Ser. B,  \textbf{116} (1952), 497--506.
	
	\bibitem{IJ} 
	M.M. Ibrahim and I.H. Jung, \emph{Complex synchronization of a ring-structured network of FitzHugh-Nagumo neurons with single and dual state gap junctions under ionic gates and external electrical disturbance}, IEEE Access, doi 10/1109/ACCESS.2019.2913872. 
	
	\bibitem{I}
	S. Indolia, A.K. Goswami, S.P. Mishra and P. Asopa, \emph{Conceptual understanding of convolutional neural networks - a deep learning approach}, Procedia Computer Science, \textbf{132} (2018), 679-688.
	
	\bibitem{JC}
	P. Jiruska, M. Curtis, J. Jefferys, C. Schevon, S. Schiff, K. Schindler, \emph{Synchronization and Desynchronization in Epilepsy: Controversies and Hypotheses}, Journal of Physiology, \textbf{591} (2012), 787--797.
	
	\bibitem{L} 
	K. Lehnertz et al., \emph{Synchronization phenomena in human epileptic brain networks}, Journal of Neuroscience Methods, \textbf{183} (2009), 42-48.
	
	\bibitem{NM}
	J. Nagumo, S. Arimoto and S. Yoshizawa, \emph{An active pulse transmission line simulating nerve axon}, Proc. IRE, \textbf{50} (1962), 2061-2070.
	
	\bibitem{PS}
	R. Pastor-Satorras and A. Vespignani, \emph{Epidemic spreading in scale-free networks}, Physical Review Letters, \textbf{86} (2001), 3200-3203.
	
	\bibitem{Phan} 
	C. Phan, {\em Random attractor for stochastic Hindmarsh-Rose equations with multiplicative noise}, Discrete and Continuous Dynamical Systems, Series B, \textbf{22} (2020), doi 10.3934/dcdsb.2020060.

	\bibitem{PY}
	C. Phan and Y. You, {\em Exponential attractors for Hindmarsh-Rose equations in neurodynamics}, arXiv: 1908.05661, J. Applied Analysis and Computation, to appear, 2020.
	
	\bibitem{CY}
	C. Phan and Y. You, \emph{A new model of coupled Hindmarsh-Rose neurons}, J. Nonlinear Modeling and Analysis, \textbf{2} (2020), 79-94.
	
	\bibitem{PyY}
	C. Phan and Y. You, \emph{Synchronization of boundary coupled Hindmarsh-Rose neuron network}, Nonlinear Analysis: Real World Applications, https://doi.org/10.1016/j.nonrwa.2020.103139, 2020.
	
	\bibitem{QT} 
	C. Quininao and J.D. Touboul, \emph{Clamping and synchronization in the strongly coupled FitzHugh-Nagumo model}, arXiv:1804.06758v3, 2018.
		
	\bibitem{SY}
	G. R. Sell and Y. You, {\em Dynamics of Evolutionary Equations}, Applied Mathematical Sciences, Volume \textbf{143}, Springer, New York, 2002.
	
	\bibitem{SG} 
	H. Serrano-Guerrero et al., \emph{Chaotic synchronization in star coupled networks of three-dimensional cellular neural networks and its applications in communications}, International J. Nonlinear Science and Numerical Simulation, \textbf{11} (2010), 571-580.
	
	\bibitem{SK}
	D. Somers and N. Kopell, \emph{Rapid synchronization through fast threshold modulation}, Biological Cybernetics, \textbf{68} (1993), 393--407.
	
	\bibitem{S}
	C. Stover, \emph{Poincar\'{e} Inequality}, http://mathworld.wolframe.com/Poincareinequality.html.
	
	\bibitem{Tm}
	R. Temam, {\em  Infinite Dimensional Dynamical Systems in Mechanics anf Physics}, 2nd edition, Springer, New York, (2013).
	
	\bibitem{TBSS} 
	I. Timodeev, M. Bazhenov, J. Seigneur, and T. Sejnowski, \emph{Neuronal synchronization and thalamocortical rhythms in sleep, wake and epilepsy}, in Jasper's Basic Mechanisms of the Epilepsies, eidt. JL Noebels at al., 2012.
	
	\bibitem{Tr}
	A. Treves, \emph{Mean-field analysis of neuronal spike dynamics}, Network, \textbf{4} (1993), 259-284.
	
	\bibitem{W}
	X.F. Wang, \emph{Complex networks, topology, dynamics and synchronization}, International J. Bifurcation and Chaos, \textbf{12} (2002), 885-916.
		
	\bibitem{WZD} 
	J. Wang, T. Zhang and B. Deng, \emph{Synchronization of FitzHugh-Nagumo neurons in external electrical stimulation via nonlinear control}, Chaos, Solitons and Fractals, \textbf{31} (2007), 30-38.
	
	\bibitem{WLZ}
	D.Q. Wei, X.S. Luo and Y.L. Zou, \emph{Firing activity of complex space-clamped FitzHugh-Nagumo neural networks}, European Physical Journal B, \textbf{63} (2008), 279-282.
	
	\bibitem{YCY} 
	X. Yang, J. Cao and Z. Yang, \emph{Synchronization of coupled reaction-diffusion neural networks with time-varying delays via pinning-impulsive controllers}, SIAM J. Control and Optimization, \textbf{51} (2013), 3486-3510.
	
	\bibitem{Yong} 
	Z. Yong et al., \emph{The synchronization of FitzHugh-Nagumo neuron network coupled by gap junction}, Chinese Physics B, \textbf{17} (2008), 2297-2303.

\end{thebibliography}

\end{document}